\documentclass[12pt,a4paper,reqno]{amsart}
\usepackage{amssymb}
\usepackage{amsmath,amsthm}
\usepackage{a4wide}
\usepackage{cancel}
\usepackage{cite}
\newtheorem{theorem}{Theorem}[section]
\newtheorem{lemma}[theorem]{Lemma}
\newtheorem{proposition}[theorem]{Proposition}

\theoremstyle{definition}
\newtheorem{definition}[theorem]{Definition}

\def\N{\mathbb N}

\newcommand{\R}{\mathbb{R}}

\newcommand{\RR}{\mathbb R}

 \newcommand{\bea}{\begin{eqnarray}}
 \newcommand{\eea}{\end{eqnarray}}

 \newcommand{\beas}{\begin{eqnarray*}}
 \newcommand{\eeas}{\end{eqnarray*}}

\theoremstyle{remark}
\newtheorem{remark}[theorem]{Remark}

\numberwithin{equation}{section}

\theoremstyle{remark}

\usepackage{color}

\usepackage{xcolor}

\title[Abelian and Tauberian Results for the FrHT]{Abelian and Tauberian Results for the Fractional Hankel Transform of Generalized Functions}

\author[S. Atanasova]{Sanja Atanasova}
\address{Faculty of Electrical Engineering and Information Technologies, Ss. Cyril and Methodius University\\  Rugjer Boshkovik 18\\
1000 Skopje\\ North Macedonia}
\email{ksanja@feit.ukim.edu.mk}
\author[S. Jak\v{s}i\'{c}]{Smiljana Jak\v{s}i\'{c}}
\address{Faculty of Foresty, University of Belgrade\\  Kneza Vi\v{s}eslava 1, \\ 11030 Belgrade \\ Serbia}
\email{smiljana.jaksic@sfb.bg.ac.rs}
\author[S. Maksimovi\'{c}]{Snje\v{z}ana Maksimovi\'{c}}
\address{Faculty of Architecture, Civil Engineering and Geodesy, University of Banja Luka\\ Bulevar vojvode Petra Bojovi\'{c}a 1A \\ 78000 Banja Luka\\ Bosnia and Hercegovina} \email{snjezana.maksimovic@aggf.unibl.org}
\author[S. Pilipovi\'{c}]{Stevan Pilipovi\'{c}}
\address{Faculty of Sciences and Mathematics, University of Novi Sad\\ Trg D. Obradovica 4\\ 21000 Novi Sad\\ Serbia}
\email{pilipovic@dmi.uns.ac.rs}

\begin{document}

\begin{abstract}
This paper aims to explore the quasiasymptotic behavior of distributions through the fractional Hankel transform. We present  Tauberian result that connects the asymptotic behavior of generalized functions in the Zemanian space with the asymptotics of their fractional Hankel transform. Additionally, we establish both the initial and final value theorems for the fractional Hankel transform of distributions.
\end{abstract}

\keywords{Fractional Hankel transform, distributions, Abelian and Tauberian theorems}

%%\pacs[JEL Classification]{D8, H51}

\subjclass[2020]{40E05, 46F05, 46F12, 47B35}
\maketitle

\section{Introduction}

The Hankel transform is an integral transform tailored for problems with circular or cylindrical symmetry, frequently used in applied mathematics, physics, and engineering. The zero-order transform describes diffraction of axially symmetric light beams, while higher-order versions apply to laser cavities with circular mirrors \cite{Pras}. Its extension to spaces of distributions has been explored in various works: Betancor and Rodríguez-Mesa studied distributions of exponential growth \cite{Betancor1}, while Zemanian extended the transform to both slowly and rapidly growing distributions \cite{Zem, Zem1, Zem2}. More  results on the Hankel transform in the setting of distribution spaces can be found in \cite{Betancor, Betancor1, Hankel1, Hankel2, 5, Ridenhour, Zem, Zem1, Zem2}.

The fractional Hankel transform (FrHT) generalizes the classical Hankel transform by replacing the standard Bessel kernel with one of fractional order. This concept was first introduced by Namias in \cite{NamiasH}.  The FrHT  is particularly useful in areas such as lens design, laser cavity analysis, and the study of wave propagation in media with a quadratic refractive index,  \cite{She, 13}. In \cite{55}, Kerr developed the theory of the FrHT in $L^2(\RR_+)$, and later extended it to the Zemanian spaces $\mathcal K_\mu(\RR_+)$ in \cite{5}. Additionally, Prasad and Singh investigated the FrHT of tempered distributions and introduced pseudo-differential operators involving the FrHT in \cite{Pras}.

The distribution theory is a powerful tool in applied mathematics.
%and the extension of integral transforms to generalized function spaces has become increasingly important, especially with the recent discovery of new transforms and their connections to modeling and solving various problems.
In the study of distribution theory, one quickly realizes that distributions does not have point values, which leads to the introduction of the concept of quasiasymptotics as a extension of the concept of asymptotics in the framework of distributions (see \cite{VDZ, 7} and references therein).  Recent articles where the quasiasymptotics is explored in a form of structural theorems  of Abelian and Tauberian-type  for different integral transforms are \cite{APS, 6-2, KPSV, AMP, MAM}.

The main objective of this paper is to establish a Tauberian theorem, an initial value theorem, and a final value theorem for the FrHT on the Zemanian space $\mathcal{K}'_{\mu}(\RR_+)$. In Section \ref{Sec Abelian and Tauberian-type results} we present a Tauberian type result, where we study the quasiasymptotic boundedness and the quasiasymptotic behavior of a generalized function in $\mathcal K'_{\mu}(\R_+)$ under the corresponding Tauberian type conditions.
In Section \ref{Sec IVT and FVT}, we present the initial and final value theorems for the FrHT. These results provide insight into the behavior of the FrHT at the boundaries of its domain:
the initial value theorem allows the evaluation of the FrHT at the origin, while the final value theorem describes its behavior as the argument tends to infinity.
By establishing these theorems, we gain a deeper understanding of the asymptotic properties of the FrHT, which is particularly relevant in applications involving wave propagation and other phenomena with boundary conditions.
%This work extends the classical results for integral transforms, providing a more comprehensive framework for the analysis of the FrHT in various contexts.

%\section{Preliminaries}

%%%%%%%%%%%%%%%%%%%%%%%%%%%%%%%%%%%%%%%%%%%%%%%%%%%%%%%%%%%%%%%%%%%%%%%%%%%%%%%%%%%%%%%

\section{Space $\mathcal{K}_{\mu}(\RR_+), \mu\geq  -1/2$}\label{Sec prostor K}

Let $m,k\in\N_0$, and $\mu\geq-1/2$. Then, following \cite{Zem}, we say that a smooth and complex-valued function
 $\varphi(x)$ on $\RR_+=(0,+\infty)$ belongs to the space $\mathcal{K}_{m,k}^{\mu}(\RR_+)$ if
$$\gamma_{m,k}^\mu(\varphi)=\sup_{x\in \RR_+}\Big|x^m(x^{-1}D)^k\big(x^{-\mu-\frac{1}{2}}\varphi(x)\big)\Big|<\infty.$$
$\mathcal{K}_{m,k}^{\mu}(\RR_+)$ is a linear space. %In addition, each $\gamma_{m,k}^\mu$ is a seminorm in $\mathcal{K}_{m,k}^{\mu}(\RR_+)$.

Zemanian defined in \cite[Section 5.2]{Zem} the locally convex space $\mathcal{K}_{\mu}(\RR_+)$ as ( see \cite[(8), p.133]{Zem})
$$\mathcal{K}_{\mu}(\RR_+)=\bigcap_{m,k\in\N_0}\mathcal{K}_{m,k}^{\mu}(\RR_+),$$
with the topology  assigned to it by the norms
$$\gamma_r^\mu(\varphi)=\max_{0\leq m \leq r \atop 0\leq k\leq r}\gamma_{m,k}^\mu(\varphi), \  r\geq 0.$$
%Note that $\gamma_{0,0}^{\mu}=\gamma_0^\mu$ is a norm.
It is shown in \cite[Lemma 5.2-2]{Zem} that $\mathcal{K}_{\mu}(\RR_+)$ is complete, and therefore a Fr\'{e}chet space; $\mathcal{D}(\RR_+)$ is a subspace of $\mathcal{K}_{\mu}(\RR_+)$, but it is not dense in $\mathcal{K}_{\mu}(\RR_+)$. Furthermore, $\mathcal{K}_{\mu}(\RR_+)$ is a dense subspace of $\mathcal{E}(\RR_+)$.

The dual space of $\mathcal K_\mu(\RR_+)$ is the space $\mathcal K'_\mu(\RR_+)$ consisting of distribution with slow growth and specific behavior in an neighborhood of zero. $\mathcal K'_\mu(\RR_+)$ complete space.
We know that if $f\in\mathcal K'_{\mu}(\R_+)$, then there exist $r>0$ and $C>0$
such that
\begin{equation}\label{uslnep}
|\langle f,\phi\rangle|\leq C\gamma^\mu_r(\phi),\; \phi\in\mathcal K_{\mu}(\R_+).
\end{equation}

If  $f(x)$ is a locally integrable function on $\R_+$ such that $f(x)$ is of slow growth, as $x\rightarrow\infty$,
and $x^{\mu+\frac{1}{2}}f(x)$ is locally integrable on $0 < x < 1$, then $f(x)$ generates a regular
generalized function $f(x)$ on $\mathcal K'_\mu(\RR_+)$ by (see \cite[p.143]{Pras})
$$\langle f,\varphi\rangle=\int_0^\infty f(x)\varphi(x)dx, \quad \varphi\in \mathcal{K}_\mu(\RR_+) . $$

\section{Fractional Hankel transform}\label{FRHT}

The Hankel transform of order $\mu$ of a function $f\in L^1(\RR_+)$ is
defined by
\begin{equation*}\label{ht}
H_\mu f(\xi)=\int_0^\infty \sqrt{x\xi}J_\mu(x\xi) f(x)dx, \ \ \xi\in \RR_+,
\end{equation*}
where, $\mu\geq -1/2$ and
$$J_\mu(x)=\left(\frac{x}{2}\right)^{\mu}\sum_{n=0}^{\infty}\frac{(-1)^n}{n!\Gamma(\mu+n+1)}\left(\frac{x}{2}\right)^{2n}, \quad x\in \RR_+ $$
is the Bessel function of  first kind and order $\mu$. If $H_\mu f\in L^1(\RR_+)$, then the inverse Hankel transform is given by
$$ f(x)=\int_0^\infty \sqrt{x\xi}J_\mu(x\xi) H_\mu f(\xi)d\xi, \ \ x\in \RR_+.$$

The FrHT is a generalization of the  Hankel
transform. It is not surprising that the FrHT closely resembles the fractional Fourier transform, given that the two-dimensional Fourier transform is closely related to the Hankel transform. In fact, FrHT can be directly derived from the two-dimensional fractional Fourier transform.

Recall \cite[(1.3)]{Pras}, the FrHT with parameter $\alpha$ of $f(x)$ for $\mu\geq -1/2$ and $\alpha\in(0,\pi)$,  is defined by

\begin{equation*}\label{frht}
{H}_\mu^\alpha f(\xi)=
\int_{0}^\infty K_{\alpha}(x,\xi)f(x)dx,
  \end{equation*}
where
\begin{equation*}\label{kernel}
 K_{\alpha}(x,\xi)=\begin{cases}
 C_{\alpha,\mu} e^{-i(\frac{x^2+\xi^2}{2}c_1)}\sqrt{x\xi c_2}J_\mu(x\xi c_2), & \alpha\neq\frac{\pi}{2}\\
 \sqrt{x\xi}J_\mu(x\xi), & \alpha=\frac{\pi}{2}
 \end{cases},
   \end{equation*}
is the kernel of the FrHT,
 $c_1=\cot\alpha$, $c_2=\csc \alpha$,  $$C_{\alpha,\mu}=\frac{e^{i(1+\mu)(\frac{\pi}{2}-\alpha)}}{\sin\alpha}.$$
Note that ${H}_\mu^\alpha$ is of period $\pi$ in $\alpha$, and ${H}_\mu^0 f={H}_\mu^\pi f=f$ for $f\in \mathcal K_\mu(\RR_+)$.

It is proven in \cite[Theorem 3.1]{Pras} that the FrHT is a continuous linear mapping of $\mathcal K_\mu(\R_+)$ onto itself.

For $f\in \mathcal K_\mu(\RR_+)$, the inverse FrHT is defined in \cite[(1.4)]{5} as
\begin{equation}\label{inverse}
(H_{\mu}^{\alpha})^{-1}H_{\mu}^{\alpha}f(x)=f(x)=\int_{0}^\infty \overline{K}_{\alpha}(x,\xi)(H_\mu^\alpha f)(\xi)d\xi.
\end{equation}
Here,
$$\overline{K}_{\alpha}(x,\xi)=C_{\alpha, \mu}^{\star}e^{i(\frac{x^2+\xi^2}{2}c_1)}\sqrt{x\xi c_2}J_\mu(x\xi c_2), \, C_{\alpha,\mu}^{\star}=\overline{C_{\alpha,\mu}}\sin{\alpha}.$$
Let $\alpha, \beta\in\mathbb R$,  $f\in \mathcal K_\mu(\R_+)$. By \cite[Theorem 4.22]{5} we have $H_{\mu}^{\alpha+\beta}f(x)=H_{\mu}^{\alpha}H_{\mu}^{\beta}f(x).$ Moreover, by \cite[Theorem 4.23]{5}, $H_{\mu}^{\alpha}f(x)\to H_{\mu}^{\beta}f(x)$ if $\alpha\to\beta.$
The following formula connects the FrHT and the Hankel transform

\begin{equation}\label{FrHT veza HT}
H_\mu^\alpha f(\xi)=C_{\alpha,\mu}e^{-ic_1\xi^2/2}H_\mu(e^{-ic_1 x^2/2}f(x))(c_2\xi), \ x,\xi\in \RR_+,
\end{equation}

\noindent see  \cite[(3.3)]{Pras}.

The FrHT can be extended to the space of distributions $\mathcal K'_\mu(\R_+)$. More precisely, \cite[Definition 3.1]{Pras} stays that the FrHT $H_\mu^{\alpha} f$ of $f\in\mathcal K'_\mu(\R_+)$ is defined by
\begin{equation*}\label{frdirect}
\langle H_\mu^{\alpha} f,\varphi\rangle=\langle  f,H_\mu^{\alpha}\varphi\rangle, \quad \varphi\in\mathcal K_\mu(\RR_+),
\end{equation*}
and furthermore, \cite[Theorem 3.2]{Pras} states that FrHT is a continuous linear mapping of $\mathcal K'_\mu(\R_+)$ onto itself.
%Thus, the definintions \eqref{zproof} and \eqref{frdirect} are equivalent.

%For $f\in\mathcal S(\mathbb R)$, the inverse FrFT is $\mathcal F_{-\alpha}, $\cite{Kerr1, Alm}, i.e. the inverse FrFT is given with
%\begin{equation}\label{inverse}
%f(x)=\int_{\mathbb R}\mathcal F_{\alpha}f(\xi)K_{-\alpha}(x,\xi)d\xi,\qquad x\in\R.
%\end{equation}
%The problem of existing the inverse FrFT of an $L^1$-function is studied in \cite[Cor. 3.14]{Chen}, where \eqref{inverse} exists for almost all $\xi\in\mathbb R.$
%For completeness of our discussion, we will state the following result

\section{The quasiasymptotic behavior and   boundedness of distributions}

We recall some known fact concerning the quasiasymptotic behavior of generalized functions, see \cite{VDZ,7,Vindas1,Vindas2,Vindas5}.
A positive real-valued function, measurable on an
interval $(0,A]$ (resp. $[A,+\infty )),$ $A>0$, is called \textit{a
slowly varying function} at the origin (resp. at infinity), if
\begin{equation} \label{limL1}
\lim_{\varepsilon \to 0^{+} } \frac{L(a\varepsilon
)}{L(\varepsilon )} =1\quad \Big(\ {\rm resp.} \lim_{\lambda \to
+\infty } \frac{L(a\lambda )}{L(\lambda )} =1\Big)\quad {\rm for \ \
each} \ a>0.
\end{equation}
Throughout the rest of the article, $L$ will consistently denote a positive  slowly varying function at the origin (resp. at infinity).
\begin{definition}
Let $f\in {\mathcal K_{\mu}'}(\RR_+)$. We say that the distribution $f$ has
the quasi-asymptotic behavior (\textit{the quasiasymptotics}) of
degree $m \in {\Bbb R}$  at a point $x_{0} \in {\Bbb R}$
with respect to $L$ in $\mathcal{K}_{\mu}'(\RR_+)$ if there exists $u\in \mathcal{K}_{\mu}'(\RR_+)$
such that for every test function $\varphi \in \mathcal{K}_{\mu}(\RR_+)$ the following limit holds:
\begin{equation} \label{qbeh_x_0}
\lim_{\varepsilon \to 0^{+} } \Big\langle \frac{f(x_{0} +\varepsilon
x)}{\varepsilon ^{m } L(\varepsilon )} ,\ \varphi (x)\Big\rangle
=\langle u(x),\varphi (x)\rangle .
\end{equation}
\end{definition}
\begin{definition}
%Let $L$ be a slowly varying function at infinity.
Let $f\in {\mathcal K_{\mu}'}(\RR_+)$. We say that the distribution $f$ has quasi-asymptotic behavior (\textit{the quasi-asymptotics}) of degree $m\in\R$ at infinity  with respect to $L$ in $\mathcal{K}_{\mu}'(\RR_+)$ if there exists $u\in \mathcal{K}_{\mu}'(\RR_+)$
such that for every test function $\varphi \in \mathcal{K}_{\mu}(\RR_+)$ the following limit holds:
\begin{equation} \label{qbeh_infty}
\lim_{\lambda \to \infty } \Big\langle \frac{f(\lambda
x)}{\lambda ^{m } L(\lambda )} ,\ \varphi (x)\Big\rangle
=\langle u(x),\varphi (x)\rangle .
\end{equation}
\end{definition}
\noindent We also use the following convenient notation for the
quasi-asymptotic behavior of degree $m \in {\Bbb R}$ at a point $x_{0} \in {\Bbb R}$ (resp. at infinity)
with respect to $L$ in $\mathcal{K}_{/mu}'(\RR_+)$
\[f(x_{0} +\varepsilon x) \sim \varepsilon ^{m } L(\varepsilon )u(x)\quad {\rm as} \quad \varepsilon \to 0^{+} ,\quad \Big({\rm resp.} \quad f(\lambda x) \sim \lambda ^{m }
L(\lambda )u(x) \quad {\rm as} \quad \lambda \to +\infty\Big) \]
which should always be interpreted in the weak topology of
$\mathcal{K}_{\mu}'(\RR_+)$, i.e., in the sense of \eqref{qbeh_x_0} (resp. \eqref{qbeh_infty}).

We underline that $u$ cannot have an arbitrary form; indeed, it
must be homogeneous with degree of homogeneity $m $, i.e.,
$u(ax)=a^{m } u(x)$, for all $a>0 $, see \cite{PST,
VDZ}.

We introduce an additional concept from quasi-asymptotic analysis, namely the notion of
 {quasi-asymptotic boundedness}, as defined in \cite{Vindas5}.
\begin{definition}
    Let $f\in \mathcal{K}_{\mu}'(\RR_+)$. The distribution $f$ is \emph{quasiasymptotically
bounded} of degree $m\in \R$ at a point $x_0\in\mathbb{R}$ with respect to $L$ in $\mathcal{K}_{\mu}'(\RR_+)$ if for every test function $\varphi \in \mathcal{K}_{\mu}(\RR_+)$ the following holds:
\begin{equation*} \label{eq10} \langle
f(x_0+\varepsilon x),\,\varphi(x)\rangle=O(\varepsilon^m
L(\varepsilon)) \,\,{\mbox {as}}\,\,\varepsilon\rightarrow\
0^+\ .
\end{equation*}
\end{definition}

\noindent This implies that for $x$ sufficiently close to $x_0$, there exists a constant $C>0$ such that
$$|\langle
f(x_0+\varepsilon x),\,\varphi(x)\rangle|\leq C\varepsilon^m
L(\varepsilon), \varphi \in \mathcal{K}_{\mu}(\RR_+).$$

The notion of quasi-asymptotic boundedness of degree $m\in \R$ at infinity with respect to $L$ in $\mathcal{K}_{\mu}'(\RR_+)$ is defined in an analogous manner.

%\begin{remark}
%The Zemanian space $\mathcal{K}_{\mu}'$ cannot be our choice, as it is not a Montel space.
%\end{remark}

\begin{remark}
We may also consider the quasi-asymptotics in other distribution
spaces. Let $\mathcal{A}(\R_+)$ denote a space of test functions on $\R_+$, and let $\mathcal{A}'(\R_+)$ be its dual space.The relation $f(x_{0} +\varepsilon x)\sim\varepsilon^{m } L(\varepsilon )u(x)\,\, {\rm as}\,
\,\varepsilon \to 0^{+} \ {\rm in\; } {\mathcal A'}({\Bbb R_+})$
means that \eqref{qbeh_x_0} holds for each $\varphi \in
{\mathcal A}({\Bbb R_+})$ (resp. for the quasi-asymptotics at
infinity in ${\mathcal A'}({\Bbb R_+})$).
\end{remark}

\subsection{Tauberian-type results}\label{Sec Abelian and Tauberian-type results}

The study of  Tauberian-type results for the Hankel transform on various distribution spaces has been carried out in \cite{Hankel1,Hankel2, Zem3}.

%The first step is to recall the  result of Bringham and Inoue \cite{Hankel2}, c.f. \cite{Hankel1}, that connects the quasi-asymotitc behaviour of distribution with the quasiasymptotics of its Hankel transform.
%
%\begin{theorem}\cite[Theorem 1.2]{Hankel2}\label{Bingham Thm}
%   Let $\mu\geq -1/2$ be fixed, $x^{\mu+1/2}f(x)\in L^1_{loc}(\RR_+)$ and $f$ be ultimately decreasing to zero at infinity. For $m\in(-\mu-3/2, 0)$, $f$ has the quasiasymptotic behavior of degree $m\in\RR$ at infinity with respect to $L$ in  $\mathcal K_{\mu}'(\RR_+)$, i.e.,
%\begin{equation*}\label{behav}
% f(x\varepsilon) \sim \varepsilon ^{m } L(\varepsilon )\quad {\rm as} \; \varepsilon \to 0^+,
% \end{equation*}
%if and only if \begin{equation*}\label{behav1}
%\begin{split}
%H_\mu f(ht) \sim h^{-m-1 }  L(1/h)2^{m+1/2}\frac{\Gamma(3/4+\mu/2+m/2)}{\Gamma(1/4+\mu/2-m/2)}\quad {\rm as} \; h \to \infty \; {\rm in} \; {\mathcal K}'_{\mu}(\RR_+),
%\end{split}
%\end{equation*}
%where $\Gamma$ denotes the Gamma function.
%\end{theorem}

%\begin{remark}
%Theorem \ref{Bingham Thm} can be extended to the space %$\mathcal{K}_{-1/2}'(\RR_+)$.
%\end{remark}
% The same holds for the  FrFST.

%The following theorem asserts that if  $f\in \mathcal{S}_{\mu}'(\R^+)$  has quasiasymptotics at zero then  $H_\mu^{\alpha} f$ quasiasymptotically oscillates as it approaches infinity.
%The reverse is true, with the additional assumption
%(\ref{123bound}).

For the proof of the next theorem, we need the following result from \cite{MAM}: $L$ is a slowly varying function at the origin if and only if there exist measurable functions $u$ and $w$ defined on the interval $(0, A]$, where $u$ is bounded and has a finite limit at 0, and $\omega$ is continuous on $[0, A]$ with $\omega(0) = 0$, such that the subsequent representation holds for $L(x)$ within the interval $(0, A]$:
$$L(x) = \exp{\left(u(x)+\int_{x}^{A}\frac{\omega(\xi)}{\xi}d\xi\right)}, \qquad  x\in(0, A] .$$
When considering functions with slow variation at the origin, the condition $\xi^{-1}\omega(\xi)\in L^{1}([1, \infty))$ implies the existence of positive constants $C_1$ and $C_2$ such that the following inequalities hold for $x > 1$:
\begin{equation}\label{ex const}
C_1 < L (x) < C_2.
\end{equation}

%Finally, we provide the structural theorem $\mathcal{K}_{-1/2}'(\RR_+)$ with respect to the quasi-asymptotic behaviour of its elements

\begin{theorem}\label{tbound} Let   $f\in \mathcal K_{\mu}'(\RR_+)$ and (\ref{ex const}) hold for $L$, and let the distribution $f$ is quasi-asymptotically bounded of degree $m\in\RR$ at the origin with respect to $L$ in $\mathcal K_{\mu}'(\RR_+)$ , i.e.,
\begin{equation}\label{qa bounded at 0}
|\langle f (\varepsilon x), \varphi (x)\rangle|\leq C_{\varphi}\varepsilon^{m}L(\varepsilon), \quad\quad\mbox{as}\quad \varepsilon\to 0^+,
\end{equation}
for every $\varphi\in\mathcal K_{\mu}(\RR_+)$, where $C_{\varphi} > 0$ depends on $\varphi$. Then the FrHT  $H^\alpha_{\mu}(f)$ ($\mu$ is fixed), is bounded at the origin, i.e., there exists  $C>0$ such that
$$\Big|\Big\langle e^{ic_1 (\frac{\xi}{\varepsilon})^2/2}H_{\mu}^{\alpha}f\Big(\frac{\xi}{\varepsilon}\Big), \varphi(\xi)\Big\rangle\Big|\leq C \varepsilon^{m+1}L(\varepsilon).$$
\end{theorem}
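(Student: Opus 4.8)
The plan is to strip away both the Gaussian factor and the dilation by routing the computation through the connection formula \eqref{FrHT veza HT}, thereby reducing the claim to the quasi-asymptotic boundedness hypothesis \eqref{qa bounded at 0} tested against a carefully chosen, $\varepsilon$-dependent test function. First I would set $g(\eta)=e^{ic_1\eta^2/2}H_\mu^\alpha f(\eta)$ and observe, using \eqref{FrHT veza HT}, that the factor $e^{ic_1\eta^2/2}$ cancels the factor $e^{-ic_1\eta^2/2}$ produced by the formula, leaving $g(\eta)=C_{\alpha,\mu}(H_\mu F)(c_2\eta)$, where $F(x)=e^{-ic_1 x^2/2}f(x)\in\mathcal K'_\mu(\RR_+)$ and $H_\mu=H_\mu^{\pi/2}$ is the ordinary Hankel transform. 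The quantity to estimate then becomes $\langle g(\xi/\varepsilon),\varphi(\xi)\rangle=C_{\alpha,\mu}\langle (H_\mu F)(c_2\xi/\varepsilon),\varphi(\xi)\rangle$, so the Gaussian has disappeared and only a dilated ordinary Hankel transform remains.

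Next I would unfold the dilation together with the distributional Hankel transform. Writing $a=c_2/\varepsilon$, the dilation formula for distributions gives $\langle(H_\mu F)(a\,\cdot),\varphi\rangle=a^{-1}\langle H_\mu F,\varphi(\cdot/a)\rangle=a^{-1}\langle F,H_\mu(\varphi(\cdot/a))\rangle$. The elementary identity $H_\mu(\varphi(\cdot/a))(\xi)=a(H_\mu\varphi)(a\xi)$, obtained from the change of variables $x\mapsto ax$ in the Hankel integral with kernel $\sqrt{x\xi}J_\mu(x\xi)$, collapses the two factors of $a$ and yields $\langle(H_\mu F)(c_2\xi/\varepsilon),\varphi(\xi)\rangle=\langle f(x),e^{-ic_1 x^2/2}(H_\mu\varphi)(c_2 x/\varepsilon)\rangle$. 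Rescaling the argument of $f$ by $\varepsilon$ then turns this into $\varepsilon\langle f(\varepsilon x),\psi_\varepsilon(x)\rangle$, where $\psi_\varepsilon(x)=e^{-ic_1\varepsilon^2 x^2/2}(H_\mu\varphi)(c_2 x)$; here $H_\mu\varphi\in\mathcal K_\mu(\RR_+)$ because $H_\mu=H_\mu^{\pi/2}$ maps $\mathcal K_\mu(\RR_+)$ onto itself.

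The main obstacle is that $\psi_\varepsilon$ depends on $\varepsilon$, whereas \eqref{qa bounded at 0} is assumed only for fixed test functions. I would overcome this in two steps. First I would upgrade \eqref{qa bounded at 0} to a uniform estimate: the family $\{\varepsilon^{-m}L(\varepsilon)^{-1}f(\varepsilon\,\cdot)\}_{0<\varepsilon\le\varepsilon_0}$ is pointwise bounded on the Fréchet (hence barrelled) space $\mathcal K_\mu(\RR_+)$, where the representation of $L$ together with \eqref{ex const} keeps $L$ bounded away from $0$ on compact parameter sets so that the bound near $0$ extends to all of $(0,\varepsilon_0]$; the Banach--Steinhaus theorem then produces $r\in\N_0$ and $C>0$ with $|\langle f(\varepsilon x),\phi(x)\rangle|\le C\varepsilon^m L(\varepsilon)\gamma^\mu_r(\phi)$ for all $\phi\in\mathcal K_\mu(\RR_+)$ and all small $\varepsilon$.

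Second, I would show $\sup_{0<\varepsilon\le1}\gamma^\mu_r(\psi_\varepsilon)<\infty$. The crucial observation is that $x^{-1}D$ is a derivation obeying the Leibniz rule and that $(x^{-1}D)^j e^{-ic_1\varepsilon^2 x^2/2}=(-ic_1\varepsilon^2)^j e^{-ic_1\varepsilon^2 x^2/2}$, a scalar multiple of modulus at most $|c_1|^j$ when $\varepsilon\le1$. Expanding $(x^{-1}D)^k(x^{-\mu-1/2}\psi_\varepsilon)$ by Leibniz and invoking the dilation behaviour of the seminorms $\gamma^\mu_{m,k}$ under $x\mapsto c_2 x$ bounds $\gamma^\mu_r(\psi_\varepsilon)$ by a constant depending only on $c_1,c_2,\mu,r$ and $\varphi$, uniformly in $\varepsilon$. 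Combining the two steps gives $|\langle f(\varepsilon x),\psi_\varepsilon\rangle|\le C'\varepsilon^m L(\varepsilon)$, and therefore $|\langle g(\xi/\varepsilon),\varphi(\xi)\rangle|=|C_{\alpha,\mu}|\,\varepsilon\,|\langle f(\varepsilon x),\psi_\varepsilon\rangle|\le C\varepsilon^{m+1}L(\varepsilon)$, which is the desired conclusion.
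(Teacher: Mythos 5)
Your proposal is correct and follows essentially the same route as the paper: both pass through the connection formula \eqref{FrHT veza HT} to trade the FrHT for the ordinary Hankel transform, move $H_\mu$ onto the test function, rescale, and then invoke the quasi-asymptotic boundedness hypothesis. The only difference is one of rigor: the paper's one-line estimate applies the hypothesis directly to the $\varepsilon$-dependent function $e^{-ic_1(\varepsilon t/c_2)^2/2}H_\mu\varphi(t)$ citing \eqref{uslnep}, whereas you make this step explicit via Banach--Steinhaus on the barrelled space $\mathcal K_\mu(\RR_+)$ together with the uniform bound $\sup_{0<\varepsilon\le 1}\gamma^\mu_r(\psi_\varepsilon)<\infty$ coming from $(x^{-1}D)^j e^{-ic_1\varepsilon^2x^2/2}=(-ic_1\varepsilon^2)^j e^{-ic_1\varepsilon^2x^2/2}$ --- which is precisely the justification the paper leaves implicit.
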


\begin{proof}
We employ \eqref{FrHT veza HT}, \eqref{qa bounded at 0}, \eqref{ex const} and (\ref{uslnep}) to conclude that there exists  $C>0$ such that
\begin{eqnarray*}
& & \frac{1}{\varepsilon^{m+1}L(\varepsilon)}\Big|\Big\langle e^{ic_1 (\frac{\xi}{\varepsilon})^2/2} H_{\mu}^{\alpha}f \Big(\frac{\xi}{\varepsilon}\Big),\varphi(\xi)\Big\rangle \Big|\\
& = & \Big|\frac{C_{\alpha,\mu}}{c_2}\frac{1}{\varepsilon^mL(\varepsilon) }\Big\langle e^{-ic_1 (\frac{\varepsilon t}{c_2})^2/2}f\Big(\frac{\varepsilon t}{c_2}\Big),H_{\mu}\varphi(t)\Big\rangle\Big|
\leq C\cdot C_{\alpha,\mu}\gamma^\mu_r(H_{\mu} \varphi)<\infty.
\end{eqnarray*}

\end{proof}
Before we state the Tauberian theorem for the quasi-asymptotic behavior, we recall the definition of the space $\mathcal B_{\mu,b}(\RR_+), \mu\geq-1/2,$ given in \cite[Section 2]{Zem2}. Let $b$ be a positive real number. The space $\mathcal B_{\mu,b}(\RR_+)$ is the space of all smooth complex-valued functions $\psi(x), \ x\in\R_+,$ such that $\psi(x)=0, \  b<x<\infty$, and $(x^{-1}D)^k x^{-\mu-1/2}\psi(x)$ is bounded on $\R_+$. The topology of $\mathcal B_{\mu,b}(\RR_+)$ is generated by the  seminorms
\begin{equation}\label{nsw1}
\gamma^\mu_{k}(f)=\sup_{x\in \R_+ \atop k\leq \mu}|(x^{-1}D)^k\big(x^{-\mu-\frac{1}{2}}f(x)\big)|<\infty\;   \ \ k\in\N_0.
\end{equation}
\noindent It should be noted that for $b<c$, $\mathcal B_{\mu,b}(\RR_+)\subset \mathcal B_{\mu,c}(\RR_+)$.

\noindent For any fixed $\mu$, $\mathcal B_{\mu}(\RR_+)$ denotes the strict inductive limit of $\mathcal B_{\mu,b}(\RR_+)$, where $b$ traverses a monotonically increasing sequence of positive numbers and tends to infinity. Both $\mathcal B_{\mu,b}(\RR_+)$ and $\mathcal B_{\mu}(\RR_+)$ are sequentially complete, see \cite{Zem2} and cf. \cite[Lemma 2]{Zem1}.\\

%Before we state the Tauberian theorem for the FrHT, in a manner similar to \cite[Section 2]{Zem2}, we recall the definition of  the space $\mathcal B_{\mu,b}(\RR_+), \ \mu\geq-1/2$ given in \cite{Zem2}.  Let $b$ be a positive real number. The space $\mathcal B_{\mu,b}(\RR_+)$ consists of all smooth complex-valued functions $\psi(x), \ x\in\R_+,$ such that $\psi(x)=0, \  b<x<\infty$, and both $(x^{-1}D)^k x^{-2\mu}\psi(x)$ and $(x^{-1}D)^k x^{-2\mu-1}\psi(x)$  are bounded on $\R_+$. The topology of $\mathcal B_{\mu,b}(\RR_+)$ is generated by the  seminorms
%\begin{equation}\label{nsw1}
%\beta^\mu_{0}(\psi)=\sup_{ 0\leq k\leq 2\mu}\beta_{0,k}^\mu (\psi),
%\mbox{
%where }\; \beta_{0,k}^\mu(\psi)=\gamma_{0,k}^{2\mu-1/2}(\psi)+\gamma_{0,k}^{2\mu+1/2}(\psi) \;(m=0).
%\end{equation}
%\noindent It should be noted that for $b<c$, $\mathcal B_{\mu,b}(\RR_+)\subset \mathcal B_{\mu,c}(\RR_+)$.

%For $\mu=-1/2$, we already noted that $\gamma_{0,0}^{-1/2}$ is the norm.
%For any fixed $\mu\in\N_0$, $\mathcal B_{\mu}(\RR_+)$ denotes the strict inductive limit of $\mathcal B_{\mu,b}(\RR_+)$, where $b$ traverses a monotonically increasing sequence of positive numbers and tends to infinity. Both $\mathcal B_{\mu,b}(\RR_+)$ and $\mathcal B_{\mu}(\RR_+)$ are  complete, see \cite{Zem2}  (cf. \cite[Lemma 2]{Zem1}).
%Let

Now, we can formulate the Tauberian theorem for the FrHT.

\begin{theorem}\label {te1} Assume that $f\in \mathcal{K}_{\mu}'(\R_+)$ is of slow growth at infinity and locally integrable on $\R_+$, defining a regular element of $\mathcal{K}_{\mu}'(\R_+)$. Let:
\begin{enumerate}
\item [(i)] The limit
\begin{equation}\label{limit1}
\lim_{\varepsilon \to 0^{+} }
\frac{e^{ic_1 (\frac{\xi}{\varepsilon})^2/2} H_{\mu}^{\alpha} f (\frac{\xi}{\varepsilon})} {\varepsilon ^{m +1} L(\varepsilon
)}=M_{\xi }<\infty,
\end{equation}

\noindent exists for all $\xi\in\R_+$.

\item [(ii)] There exist constants $N>0$, $C=C(\alpha)>0$, and $0<\varepsilon_0\leq 1$ such that
\begin{equation}\label{ogranicuvanje1}
\frac{|e^{ic_1 (\frac{\xi}{\varepsilon})^2/2} H_{\mu}^{\alpha} f (\frac{\xi}{\varepsilon})|} {\varepsilon ^{m+1} L(\varepsilon
)}\leq C |\xi|^{N+\mu+\frac{1}{2}},
\end{equation}
\noindent for all $\xi\in\R_+$ and $0< \varepsilon \leq
\varepsilon_0$.
\end{enumerate}
Then $f$ has the quasi-asymptotic behavior of degree $m\in\RR$ at $0^+$ in $\mathcal{K}_{\mu}'(\RR_+)$.

\end{theorem}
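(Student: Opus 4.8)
The plan is to transfer the problem from the fractional Hankel transform to the classical one by means of the connection formula \eqref{FrHT veza HT}. Setting $g(x)=e^{-ic_1x^2/2}f(x)$, formula \eqref{FrHT veza HT} gives $e^{ic_1\xi^2/2}H_\mu^\alpha f(\xi)=C_{\alpha,\mu}H_\mu g(c_2\xi)$, so the two chirp factors cancel. Writing $\xi=\eta/c_2$, hypotheses \eqref{limit1} and \eqref{ogranicuvanje1} translate into statements about the dilated classical Hankel transform $H_\mu g(\eta/\varepsilon)$: namely $H_\mu g(\eta/\varepsilon)/(\varepsilon^{m+1}L(\varepsilon))\to M_{\eta/c_2}/C_{\alpha,\mu}$ pointwise, and $|H_\mu g(\eta/\varepsilon)|/(\varepsilon^{m+1}L(\varepsilon))\le (C/|C_{\alpha,\mu}|)|\eta/c_2|^{N+\mu+\frac{1}{2}}$ for $0<\varepsilon\le\varepsilon_0$. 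Here the pointwise values are meaningful because (i)--(ii) force $H_\mu^\alpha f$, hence $H_\mu g$, to be a locally integrable function with the stated growth, so that the distribution $H_\mu g$ is regular.

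The second step is a Parseval reduction for $g$. Since the classical Hankel transform is its own inverse, the distributional identity $\langle g,\psi\rangle=\langle H_\mu g,H_\mu\psi\rangle$ holds for $\psi\in\mathcal K_\mu(\RR_+)$, and a change of variables gives the scaling rule $H_\mu[\varphi(\cdot/\varepsilon)](\xi)=\varepsilon(H_\mu\varphi)(\varepsilon\xi)$. Combining these with $\langle g(\varepsilon\,\cdot),\varphi\rangle=\varepsilon^{-1}\langle g,\varphi(\cdot/\varepsilon)\rangle$ and substituting $\eta=\varepsilon\xi$ yields
\begin{equation*}
\frac{\langle g(\varepsilon x),\varphi(x)\rangle}{\varepsilon^{m}L(\varepsilon)}=\int_0^\infty\frac{H_\mu g(\eta/\varepsilon)}{\varepsilon^{m+1}L(\varepsilon)}\,(H_\mu\varphi)(\eta)\,d\eta,\qquad \varphi\in\mathcal K_\mu(\RR_+).
\end{equation*}
I would then apply dominated convergence: by the first step the integrand converges pointwise to $C_{\alpha,\mu}^{-1}M_{\eta/c_2}(H_\mu\varphi)(\eta)$ and is dominated by $(C/|C_{\alpha,\mu}|)|\eta/c_2|^{N+\mu+\frac{1}{2}}|(H_\mu\varphi)(\eta)|$ for $0<\varepsilon\le\varepsilon_0$. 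Because $H_\mu$ maps $\mathcal K_\mu(\RR_+)$ continuously into itself, $H_\mu\varphi\in\mathcal K_\mu(\RR_+)$, so $H_\mu\varphi(\eta)=O(\eta^{\mu+\frac{1}{2}})$ near $0$ (making the dominating function $O(\eta^{N+2\mu+1})$, integrable since $N>0$ and $\mu\ge-\frac{1}{2}$) and $H_\mu\varphi$ decays faster than any power at infinity; hence the dominating function is integrable and $\lim_{\varepsilon\to0^+}\langle g(\varepsilon x),\varphi\rangle/(\varepsilon^{m}L(\varepsilon))=:\langle v,\varphi\rangle$ exists.

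The third step transfers this back to $f$. Writing $\varphi_\varepsilon(x)=e^{ic_1\varepsilon^2x^2/2}\varphi(x)$ one has $\langle f(\varepsilon x),\varphi\rangle=\langle g(\varepsilon x),\varphi_\varepsilon\rangle$. I would first show $\varphi_\varepsilon\to\varphi$ in $\mathcal K_\mu(\RR_+)$: since $x^{-1}D$ is a derivation and $(x^{-1}D)^j e^{ic_1\varepsilon^2x^2/2}=(ic_1\varepsilon^2)^j e^{ic_1\varepsilon^2x^2/2}$ for $j\ge1$, the Leibniz expansion of $\gamma_{m,k}^\mu(\varphi_\varepsilon-\varphi)$ has every $j\ge1$ term of size $O(\varepsilon^2)$, while the $j=0$ term $\sup_x|e^{ic_1\varepsilon^2x^2/2}-1|\,|x^m(x^{-1}D)^k(x^{-\mu-\frac{1}{2}}\varphi)|$ tends to $0$ by splitting at a large radius (using $|e^{i\theta}-1|\le|\theta|$ for small $x$ and the finiteness of $\gamma_{m+2,k}^\mu(\varphi)$ for large $x$). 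Next, the domination of the second step gives a uniform estimate $|\langle g(\varepsilon x),\psi\rangle|/(\varepsilon^{m}L(\varepsilon))\le C'\gamma_{r'}^\mu(\psi)$ (the integral is controlled by seminorms of $H_\mu\psi$, hence of $\psi$ by continuity of $H_\mu$). Applying this to $\psi=\varphi_\varepsilon-\varphi$ shows $\langle g(\varepsilon x),\varphi_\varepsilon-\varphi\rangle/(\varepsilon^{m}L(\varepsilon))\to0$, so $\lim_{\varepsilon\to0^+}\langle f(\varepsilon x),\varphi\rangle/(\varepsilon^{m}L(\varepsilon))=\langle v,\varphi\rangle=:\langle u,\varphi\rangle$. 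Passing to the limit in \eqref{ogranicuvanje1} gives $|M_\xi|\le C'|\xi|^{N+\mu+\frac{1}{2}}$, whence the same seminorm estimate shows $u\in\mathcal K_\mu'(\RR_+)$; its homogeneity of degree $m$ is then automatic, so $f$ has the claimed quasi-asymptotic behavior \eqref{qbeh_x_0} at $0^+$.

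I expect the main obstacle to be this third step, the passage from $g$ back to $f$ through the chirp factor $e^{ic_1x^2/2}$. Verifying $\varphi_\varepsilon\to\varphi$ in the Fréchet topology of $\mathcal K_\mu(\RR_+)$ and, crucially, extracting from \eqref{ogranicuvanje1} a bound on $\langle g(\varepsilon x),\cdot\rangle/(\varepsilon^{m}L(\varepsilon))$ that is uniform in $\varepsilon$ and continuous in the test function is what forces the difference term to vanish; without this uniformity the pointwise limit for $g$ would not transfer to $f$. A secondary point needing care, already at the Parseval step, is the justification that the hypotheses genuinely render $H_\mu g$ a regular (function) distribution, so that the displayed integral representation is legitimate.
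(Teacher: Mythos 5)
Your proposal is correct and follows essentially the same route as the paper: both arguments rest on the inversion/Parseval identity for the (fractional) Hankel transform, which converts $\langle f(\varepsilon x),\varphi(x)\rangle/(\varepsilon^{m}L(\varepsilon))$ into an integral of $e^{ic_1(\xi/\varepsilon)^2/2}H_{\mu}^{\alpha}f(\xi/\varepsilon)/(\varepsilon^{m+1}L(\varepsilon))$ against $H_{\mu}\varphi$, followed by dominated convergence using (i) and (ii) and the slow variation of $L$. There are two differences worth recording. First, the paper restricts to the dense subspace $\mathcal{B}_{\mu}(\RR_+)$ and invokes the Banach--Steinhaus theorem, whereas you work directly on all of $\mathcal{K}_{\mu}(\RR_+)$; this is legitimate because $H_{\mu}\varphi\in\mathcal{K}_{\mu}(\RR_+)$ already supplies the decay ($O(\eta^{\mu+1/2})$ at $0$, rapid decay at infinity) that makes your dominating function integrable, so the detour through a dense subspace is not needed. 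Second, and more substantively, your third step --- removing the chirp factor by proving $\varphi_{\varepsilon}\to\varphi$ in $\mathcal{K}_{\mu}(\RR_+)$ via the identity $(x^{-1}D)^{j}e^{ic_1\varepsilon^{2}x^{2}/2}=(ic_1\varepsilon^{2})^{j}e^{ic_1\varepsilon^{2}x^{2}/2}$, combined with the $\varepsilon$-uniform seminorm bound on $\langle g(\varepsilon x),\cdot\rangle/(\varepsilon^{m}L(\varepsilon))$ extracted from (ii) --- addresses a point the published proof leaves implicit: every displayed limit in the paper retains the factor $e^{-ic_1(\varepsilon t)^{2}/2}$, i.e.\ it establishes the quasi-asymptotics of $e^{-ic_1x^{2}/2}f(x)$, while the theorem's conclusion concerns $f$ itself. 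Your argument supplies exactly the equicontinuity-plus-convergence-of-test-functions mechanism needed to pass from one to the other, so it is, if anything, more complete than the paper's.
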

%%%%%%%%%%%%%%%%%%%%%%%%%%%%%%%%%%%%%%%%%%%%%%%%%%%%%%%%%%%%%%%%%%55%%%%%%%%%%%%%%%%%%%%%%%

\begin{proof} In view of \eqref{limit1} and \eqref{ogranicuvanje1}, we may assume that the function defined by $\xi\mapsto M_{\xi}, \xi\in\R_+ $, is measurable and satisfies the following
estimate
$|M_{\xi} |\leq C|\xi|^{N+\mu+\frac{1}{2}}, \xi\in\R_+,$ where $C=C(\alpha)>0$.
By the Banach–Steinhaus theorem, it is sufficient to verify that $ \lim_{\varepsilon \to 0^{+} }\langle\frac{e^{-ic_1(\varepsilon x)^2/2}f(\varepsilon x)}{\varepsilon ^{m } L(\varepsilon )}, \varphi \rangle$ exists for all $\varphi$ from a dense subspace of $\mathcal K_{\mu}(\RR_+)$. Since $\mathcal B_{\mu}(\RR_+)$ is dense in $\mathcal K_{\mu}(\RR_+)$,
by the  use of the inversion formula \eqref{inverse}, it is sufficient to show that for $\varphi\in\mathcal B_\mu$ there holds

\begin{align*}
\mathop{{\rm lim}}\limits_{\varepsilon \to 0^{+} } \Big\langle \frac{e^{-ic_1(\varepsilon x)^2/2}f(\varepsilon x)}{\varepsilon ^{m } L(\varepsilon )} ,\varphi (x)\Big\rangle &=\mathop{{\rm lim}}\limits_{\varepsilon \to 0^{+} } \Big\langle \frac{e^{-ic_1x^2/2}f(x)}{\varepsilon ^{m +1} L(\varepsilon )} ,\varphi \Big(\frac{x}{\varepsilon}\Big)\Big\rangle \\
& =\mathop{{\rm lim}}\limits_{\varepsilon \to 0^{+} } \Big\langle \frac{e^{-ic_1x^2/2}}{\varepsilon ^{m+2 } L(\varepsilon )} \int_{0}^\infty (H_{\mu}^{\alpha}f)\Big(\frac{\xi}{\varepsilon}\Big)\overline{K}_{\alpha}(x, \frac{\xi}{\varepsilon})d\xi, \varphi\Big(\frac{x}{\varepsilon}\Big)\Big\rangle\\
%&=\mathop{{\rm lim}}\limits_{\varepsilon \to 0^{+} } \int_{\mathbb R} \frac{e^{ic_1 (\frac{\xi}{\varepsilon})^2/2}H_\mu^{\alpha}f(\frac{\xi}{\varepsilon})}{\varepsilon ^{m+1 } L(\varepsilon )}H_{\mu}\varphi(c_2\xi)d\xi.
&=\mathop{{\rm lim}}\limits_{\varepsilon \to 0^{+} } C^*_{\alpha,\mu}\Big\langle \frac{e^{ic_1 (\frac{\xi}{\varepsilon})^2/2}H_{\mu}^{\alpha}f(\frac{\xi}{\varepsilon})}{\varepsilon ^{m+1 } L(\varepsilon )},H_\mu\varphi(c_2\xi)\Big\rangle\\
&=\mathop{{\rm lim}}\limits_{\varepsilon \to 0^{+} } C^*_{\alpha,\mu}c_2^m \Big\langle \frac{e^{ic_1 (\frac{\xi}{c_2\varepsilon})^2/2}H_{\mu}^{\alpha}f(\frac{\xi}{c_2\varepsilon})}{(c_2\varepsilon) ^{m+1 } L(c_2\varepsilon )\frac{L(\varepsilon)}{L(c_2\varepsilon)}},H_{\mu}\varphi(\xi)\Big\rangle .
\end{align*}
The assumptions in \eqref{limit1} and
\eqref{ogranicuvanje1}, allow us to apply the Lebesque's dominated convergence
theorem. Applying the estimate \eqref{limL1}, we obtain
\begin{equation}\label{Int fin}
\mathop{\lim }\limits_{\varepsilon \to 0^{+} } \langle \frac{e^{-ic_1(\varepsilon t)^2/2}f(\varepsilon t)}{\varepsilon ^{m} L(\varepsilon )} ,\varphi (t)\rangle =C^*_{\alpha,\mu}c_2^m \langle M_\xi,H_{\mu}\varphi(\xi)\rangle.
\end{equation}
Note that
$|M_\xi|=O(|\xi |^{N+\mu+\frac{1}{2}} )$ for some \textit{$N>0$
}, and that $|H_{\mu}\varphi(\xi)|=O(|\xi|^{-n+\mu+\frac{1}{2}}
)$ holds for all $n>0$, whenever $\varphi \in {\mathcal
K_{\mu}}(\Bbb R_+)$. It follows that the last integral in (\ref{Int fin}) converges absolutely. Finally, we conclude that
$$\lim_{\varepsilon \to 0^{+} } \langle
\frac{e^{-ic_1(\varepsilon t)^2/2}f(\varepsilon t)}{\varepsilon ^{m } L(\varepsilon )}
,\varphi (t)\rangle $$ exists for all $\varphi \in {\mathcal
K_{\mu}}(\Bbb R_+)$. %This completes the proof of theorem.
\end{proof}

\section{Initial and final value theorems for the FrHT on distribution spaces}\label{Sec IVT and FVT}

The final value theorem (FVT) for the FrHT provides a way to find the behavior of $f(x)$, as $x\rightarrow\infty$, using the behavior of its FrHT $H_{\mu}^{\alpha}f(\xi)$, as $\xi\rightarrow 0^+$. The initial value theorem (IVT) for the FrHT provides a way to recover $f(x)$, as $x\rightarrow 0^+$, from the asymptotic behavior of its FrHT, $H_{\mu}^{\alpha}f(\xi)$, as $\xi\rightarrow\infty$.

The initial and final value theorems for FrHT in the Zemanian space $\mathcal{K}_{\mu}(\RR_+)$ are given  in \cite{Gudadhe}.

%\begin{remark}
%By careful examination of the proof given in \cite{Gudadhe}, we conclude that the IVT and FVT obtained for FrHT in the Zemanian space remain true in $\mathcal{K}_{-1/2}(\RR_+)$.
%\end{remark}

\subsection{An initial value theorem for the FrHT} After making some changes and utilizing the proof of \cite[Lemma 1]{Zem3}, the IVT for the FrHT \cite[Theorem 2.1]{Gudadhe} takes on the following form:

\begin{lemma}\label{IVT lema}
Let $3/2 < \eta < 2+\mu$. Let $f(x)$ be a measurable function that is Lebesgue integrable on every interval of the form $X<x<\infty$ ($X>0$), and suppose that $f(x)$ satisfies the following conditions:
\begin{itemize}
\item[(i)] $f(x)\rightarrow 0$ as $x\rightarrow\infty$,
\item[(ii)] $x^{\eta-1/2}f(x)$ is absolutely continuous on $0\leq x <\infty$.
\end{itemize}
Set
$$ \lim_{x\rightarrow 0^+}e^{-\frac{i}{2}x^2 c_1}x^{\eta-\frac{1}{2}}f(x)=\rho\frac{ c_2^{\frac{3}{2}-\eta}}{C_{\alpha,\mu}}.$$
%$$\lim_{x\rightarrow 0^+}x^{\eta-1/2}f(x)=\rho.$$
Then
$$\lim_{\xi\rightarrow\infty}e^{i\xi^2c_1/2}\xi^{3/2-\eta}H^\alpha_{\mu} f(\xi)=\rho\; H(\mu,\eta),$$
where
\begin{equation}\label{funkcija H}
H(\mu,\eta)=\frac{\Gamma(1+\mu/2-\eta/2)}{2^{\eta-1}\Gamma(\mu/2+\eta/2)}.
\end{equation}
\end{lemma}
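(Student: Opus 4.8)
The plan is to reduce the Fractional Hankel statement to the classical Hankel transform via the bridging identity \eqref{FrHT veza HT}, and then invoke a known initial-value/Abelian result for the ordinary Hankel transform (the cited \cite[Lemma 1]{Zem3}). Concretely, I would start from
\[
H_\mu^\alpha f(\xi)=C_{\alpha,\mu}e^{-ic_1\xi^2/2}H_\mu\bigl(e^{-ic_1 x^2/2}f(x)\bigr)(c_2\xi),
\]
and multiply both sides by $e^{i\xi^2 c_1/2}\xi^{3/2-\eta}$. This kills the outer exponential factor and turns the left-hand quantity into exactly the object whose limit we want, while the right-hand side becomes $C_{\alpha,\mu}\,\xi^{3/2-\eta}H_\mu(g)(c_2\xi)$ where $g(x):=e^{-ic_1x^2/2}f(x)$. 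The key point is that the hypotheses on $f$ are tailored so that $g$ satisfies precisely the assumptions of the classical Hankel Abelian lemma: condition (i) gives $g(x)\to 0$ as $x\to\infty$ (since $|e^{-ic_1x^2/2}|=1$), and condition (ii) makes $x^{\eta-1/2}g(x)=x^{\eta-1/2}e^{-ic_1x^2/2}f(x)$ absolutely continuous on $[0,\infty)$. Thus the problem is genuinely a matter of transporting the classical result through the change of variables $\xi\mapsto c_2\xi$.

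Next I would record the behaviour of $g$ near the origin. By hypothesis
\[
\lim_{x\to 0^+}e^{-\frac{i}{2}x^2 c_1}x^{\eta-\frac12}f(x)=\rho\,\frac{c_2^{\frac32-\eta}}{C_{\alpha,\mu}},
\]
so $\lim_{x\to 0^+}x^{\eta-1/2}g(x)=\rho\,c_2^{3/2-\eta}/C_{\alpha,\mu}=:A$. The classical Hankel initial-value lemma then yields
\[
\lim_{y\to\infty} y^{3/2-\eta}H_\mu(g)(y)=A\cdot H(\mu,\eta),
\]
with $H(\mu,\eta)$ given by \eqref{funkcija H}; here $H(\mu,\eta)$ is exactly the Mellin-type constant $\int_0^\infty \sqrt{t}\,J_\mu(t)\,t^{-\eta+1/2}\,dt$ produced when the kernel $\sqrt{xy}J_\mu(xy)$ is integrated against the leading power $x^{\eta-1/2}$, computed via the standard Weber–Schafheitlin / Mellin transform of the Bessel function. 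Setting $y=c_2\xi$ and substituting,
\[
\lim_{\xi\to\infty} (c_2\xi)^{3/2-\eta}H_\mu(g)(c_2\xi)=A\,H(\mu,\eta),
\]
I would then reassemble: multiply by $C_{\alpha,\mu}c_2^{\eta-3/2}$ to match the constants, so that the stray $c_2$ powers from the dilation cancel against the $c_2^{3/2-\eta}$ inside $A$, leaving
\[
\lim_{\xi\to\infty}e^{i\xi^2 c_1/2}\xi^{3/2-\eta}H_\mu^\alpha f(\xi)=\rho\,H(\mu,\eta),
\]
which is the assertion.

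The main obstacle is establishing the classical Hankel initial-value limit itself, i.e.\ the content imported from \cite[Lemma 1]{Zem3}: one must show that the asymptotic contribution to $y^{3/2-\eta}H_\mu(g)(y)$ as $y\to\infty$ comes entirely from the origin behaviour $g(x)\sim A\,x^{1/2-\eta}$, with the tail and the regular part contributing nothing in the limit. The clean way to do this is to integrate by parts using the absolute continuity in (ii), writing $x^{\eta-1/2}g(x)$ as the integral of its derivative, and then to exploit the oscillation and decay of the Bessel kernel (the Riemann–Lebesgue mechanism together with the known decay $J_\mu(t)=O(t^{-1/2})$) to dispatch the remainder terms; the constant $H(\mu,\eta)$ emerges as the value of the resulting convergent Bessel integral. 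I would treat the verification that $g$ meets the lemma's hypotheses and the bookkeeping of the constants $C_{\alpha,\mu}$, $c_2$ as routine, and concentrate the effort on justifying the passage to the limit under the integral sign for the transformed function.
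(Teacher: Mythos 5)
Your proposal is correct, and the constants do work out: with $A=\rho\,c_2^{3/2-\eta}/C_{\alpha,\mu}$ and the dilation $y=c_2\xi$, the factor $C_{\alpha,\mu}c_2^{\eta-3/2}$ cancels exactly against $A$, leaving $\rho\,H(\mu,\eta)$. However, you take a different route from the paper. You reduce the statement to the classical Hankel initial-value lemma by conjugating with the identity \eqref{FrHT veza HT}, applying the known result to $g(x)=e^{-ic_1x^2/2}f(x)$, and rescaling; the paper instead re-runs Zemanian's argument directly on the fractional kernel $K_\alpha(x,\xi)$. Concretely, the paper writes $\rho H(\mu,\eta)$ via the Weber--Schafheitlin integral $\int_0^\infty z^{1-\eta}J_\mu(z)\,dz=H(\mu,\eta)$ with $z=x\xi c_2$, splits the resulting difference at $x=X$, controls the piece near the origin by $\sup_{0<x<X}|C_{\alpha,\mu}e^{-ix^2c_1/2}(xc_2)^{\eta-1/2}f(x)-\rho c_2|$ (small by the hypothesis at $0^+$), and kills the tail using the boundedness of $\sqrt{z}J_\mu(z)$, the integrability of $f$ on $(X,\infty)$, and the decay $\xi^{3/2-\eta}\to0$. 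Your reduction buys brevity and makes transparent why the hypotheses are exactly the classical ones transported through the unimodular phase; the paper's direct proof buys self-containedness (it only imports the Bessel integral, not the whole classical lemma). One caveat: your sketch of how the imported classical lemma would be proved (integration by parts plus a Riemann--Lebesgue mechanism) is not the mechanism actually used --- the standard proof, and the paper's, is the elementary split-integral estimate just described, and the absolute-continuity hypothesis serves mainly to guarantee finiteness of the supremum near the origin rather than to enable an integration by parts. Since you invoke the classical lemma as a citable black box, this does not create a gap, but if you intended to prove it from scratch you should replace that sketch with the direct estimate.
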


\begin{remark}
It should be noted that $H^\alpha_{\mu} f(\xi)$ exists for every positive value of $\xi$ since $f(x)=O(x^{1/2-\eta})$ as $x\rightarrow 0^+$, $J_{\mu}(x)=O(x^{\mu})$ as $x\rightarrow 0^+$ and $\sqrt{x}J_{\mu}(x)=O(1)$ as $x\rightarrow \infty$.
\end{remark}

\begin{proof}
Application of
\begin{equation}\label{pomocna}
    \int_{0}^{\infty}z^{1-\eta}J_{\mu}(z)dz=H(\mu,\eta),\quad 1<\nu<2+\mu,\quad\quad \mbox{(see \cite[p.22,(7)]{Erdelyi})},
\end{equation}
with $z=x\xi c_2$ and $Z=X\xi c_2$ leads to

\begin{eqnarray}
& & |e^{i\xi^2c_1/2}\xi^{\frac{3}{2}-\eta}H^\alpha_{\mu} f(\xi)-\rho H(\mu,\eta)|\nonumber  \\
& = & \Big{|} e^{i\xi^2c_1/2}\xi^{\frac{3}{2}-\eta}\int_0^\infty C_{\alpha,\mu}e^{-i(\frac{x^2+\xi^2}{2}c_1)}\sqrt{x\xi c_2}J_{\mu}(x\xi c_2)f(x)dx - \rho \xi c_2 \int_0^\infty(x\xi c_2)^{1-\eta}J_{\mu}(x\xi c_2)dx\Big{|}\nonumber \\
& \leq & \xi  \int_0^\infty \Big| (x\xi c_2)^{1-\eta} J_{\mu}(x\xi c_2) \Big|\; \Big| C_{\alpha,\mu}e^{-\frac{i}{2}x^2 c_1}(xc_2)^{\eta-\frac{1}{2}}f(x)-\rho c_2 \Big| dx \nonumber \\
& \leq & \sup_{0<x<X}\Big| C_{\alpha,\mu}e^{-\frac{i}{2}x^2 c_1}(xc_2)^{\eta-\frac{1}{2}}f(x)-\rho c_2\Big|\;\frac{1}{c_2}  \int_0^Z \Big| z^{1-\eta}J_{\mu}(z)\Big| dz  \label{konvergencija1}\\
& + &  \xi^{\frac{3}{2}-\eta}\int_{X}^\infty \Big| \sqrt{x\xi c_2}J_{\mu}(x\xi c_2)\Big|\;\Big|C_{\alpha,\mu}e^{-\frac{i}{2}x^2c_1}f(x)-\rho c_2(xc_2)^{\frac{1}{2}-\eta}\Big|dx\label{konvergencija2} .
\end{eqnarray}

\noindent Utilizing $J_{\mu}(z)=O(z^{\mu})$ as $z\rightarrow 0^+$ and $1-\eta+\mu >-1$, one concludes that the integral in (\ref{konvergencija1}) converges. Let $\epsilon >0$. By choosing $X$ small enough, the second term in (\ref{konvergencija1}), which does not depend on $\xi$, can be made less than $\epsilon/2$. Since $\sqrt{z}J_{\mu}(z)$ is bounded on $\R_+$, with $X$ fixed, the first term in (\ref{konvergencija2}) is less than $\epsilon/2$, for all sufficiently large $\xi$. Thus, the proof of the lemma is complete.

\end{proof}

The next step is to extend Lemma \ref{IVT lema} to distributions. We utilize the proof of \cite[Theorem 1]{Zem3}.

In extending the results, we need the following fact \cite[Section 3.3]{Zem4}: If $f\in\mathcal{E}'(\RR_+)$, there exists a constant $C$ and a non-negative integer $r$ such that for every $\varphi\in\mathcal{D}(\RR_+)$
\begin{equation}\label{difer}
|\langle f, \varphi\rangle|\leq C\sup_{x\in\R_+}|D_x^r\varphi(x)|.
\end{equation}
The smallest non-negative integer $r$ such that (\ref{difer}) holds is called the order of $f$.

The following proposition is required for the subsequent proofs.

\begin{proposition}(cf. \cite[Theorem 2]{Zem1})\label{ocena pomocne}
If $f\in\mathcal{E}'(\RR_+)$, then
$$F(\xi)=\langle f(x), C_{\alpha,\mu}e^{-i(\frac{x^2+\xi^2}{2}) c_1}\sqrt{x\xi c_2}J_{\mu}(x\xi c_2)\rangle,\quad\xi\in\RR_+,$$
is a smooth function on that satisfies the inequality of the form
$$ |F(\xi)| \leq \left\{\begin{array}{ll}
           K\xi^{\mu+1}, & \mbox{if}\quad 0<\xi<1\\
            K\xi^s,  & \mbox{if}\quad 1<\xi<\infty,
         \end{array}
         \right. $$
where $K$ and $s$ are sufficiently large constants.
\end{proposition}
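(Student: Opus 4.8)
The plan is to view $F(\xi)=\langle f(x),K_\alpha(x,\xi)\rangle$ as the action of the compactly supported distribution $f$ on the smooth curve $\xi\mapsto K_\alpha(\cdot,\xi)$ and to reduce everything to uniform estimates on the $x$-derivatives of the kernel. Since $f\in\mathcal E'(\R_+)$, its support lies in a compact interval $[a,b]\subset\R_+$ with $a>0$; fixing a cutoff $\chi\in\mathcal D(\R_+)$ that equals $1$ on a neighbourhood of $\supp f$ and is supported in $[a,b]$, we have $F(\xi)=\langle f(x),\chi(x)K_\alpha(x,\xi)\rangle$, so that \eqref{difer} together with the Leibniz rule yields a constant $C>0$ and an integer $r$ (the order of $f$) with
\[
|F(\xi)|\le C\sum_{j=0}^{r}\sup_{x\in[a,b]}\bigl|D_x^{\,j}K_\alpha(x,\xi)\bigr|.
\]
Because $K_\alpha$ is jointly smooth on $\R_+\times\R_+$, the same compact-support argument shows that the difference quotients in $\xi$ converge together with all $x$-derivatives uniformly on $[a,b]$; hence $\xi\mapsto K_\alpha(\cdot,\xi)$ is smooth into $\mathcal E(\R_+)$ and $F\in C^\infty(\R_+)$ with $F^{(k)}(\xi)=\langle f(x),\partial_\xi^k K_\alpha(x,\xi)\rangle$, which settles smoothness.

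To carry out the size estimate I would factor $K_\alpha(x,\xi)=C_{\alpha,\mu}\,e^{-i\xi^2c_1/2}\,e^{-ix^2c_1/2}\,\psi(x\xi c_2)$, where $\psi(z)=\sqrt z\,J_\mu(z)$, and note that the $\xi$-exponential has modulus $1$ and may be discarded, while $e^{-ix^2c_1/2}$ and all its $x$-derivatives are bounded on $[a,b]$. By Leibniz, $D_x^{\,j}K_\alpha$ is a finite sum of terms $\bigl(D_x^{\,i}e^{-ix^2c_1/2}\bigr)(\xi c_2)^{\ell}\psi^{(\ell)}(x\xi c_2)$ with $i+\ell\le j\le r$, so the whole matter rests on the behaviour of $\psi$ and its derivatives near $0$ and near $\infty$.

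For $0<\xi<1$ the small-argument expansion $\psi(z)=2^{-\mu}z^{\mu+\frac12}\sum_{n\ge 0}\frac{(-1)^n}{n!\,\Gamma(\mu+n+1)}(z/2)^{2n}$ gives $\psi^{(\ell)}(z)=O(z^{\mu+\frac12-\ell})$ as $z\to 0^+$, whence
\[
(\xi c_2)^{\ell}\psi^{(\ell)}(x\xi c_2)=O\bigl(\xi^{\mu+\frac12}\,x^{\mu+\frac12-\ell}\bigr)=O\bigl(\xi^{\mu+\frac12}\bigr)
\]
uniformly for $x\in[a,b]$: since $a>0$ the factor $x^{\mu+\frac12-\ell}$ stays bounded, and the chain-rule factor $\xi^{\ell}$ exactly compensates the lowered power of $\xi$. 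This produces the asserted polynomial bound near the origin, with the exponent being the order of vanishing $\mu+\tfrac12$ of $\sqrt z\,J_\mu(z)$ at $z=0$. For $\xi>1$ I would invoke the classical Hankel asymptotics, by which $\sqrt z\,J_\mu(z)$ and each of its derivatives are $O(1)$ as $z\to\infty$; then $(\xi c_2)^{\ell}\psi^{(\ell)}(x\xi c_2)=O(\xi^{\ell})$, and taking the maximum over $\ell\le j\le r$ gives $|F(\xi)|\le K\xi^{s}$ with $s$ comparable to the order $r$ of $f$.

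The main obstacle is precisely this uniform control of the derivatives $\psi^{(\ell)}$ in the two regimes together with the bookkeeping of the powers of $\xi$: one must verify that near the origin differentiation does not spoil the gain $\xi^{\mu+\frac12}$ — the compensation between the chain-rule factor $\xi^{\ell}$ and the reduced power $\psi^{(\ell)}(z)=O(z^{\mu+\frac12-\ell})$ is the crucial point — and that at infinity the derivatives of $\sqrt z\,J_\mu(z)$ remain bounded so that only the polynomial factors $\xi^{\ell}$ survive. Once these two Bessel estimates are secured (by differentiating the power series at $0$ and the asymptotic expansion at $\infty$), the two cases combine into the claimed piecewise bound, and the cutoff $\chi$ guarantees that the estimate sees only the values of the kernel on the compact set $[a,b]\supset\supp f$, so no contribution from outside the support intervenes.
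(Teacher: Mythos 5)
Your argument is correct in substance and is exactly the route the paper intends: the paper itself gives no proof of Proposition \ref{ocena pomocne} (it only cites Zemanian), but the estimate you describe --- cut off near $\supp f$, apply the order inequality \eqref{difer}, expand $D_x^j$ of the kernel by Leibniz, and control $\psi(z)=\sqrt{z}J_\mu(z)$ and its derivatives via the power series at $0$ and the Hankel asymptotics at $\infty$ --- is precisely the computation the authors carry out for $H(\xi)$ inside the proof of Theorem \ref{IV Thm}, yielding $|F(\xi)|\le K\xi^{r}$ for $\xi>1$ with $r$ the order of $f$. Your treatment of the origin is in fact slightly cleaner than theirs: by letting the chain-rule factor $(\xi c_2)^{\ell}$ absorb the loss $\psi^{(\ell)}(z)=O(z^{\mu+\frac12-\ell})$ and dumping the leftover negative power onto $x^{\mu+\frac12-\ell}$, which is harmless because $\supp f\subset[a,b]$ with $a>0$, you avoid the restriction $r<\mu+\frac12$ that the paper invokes in Theorem \ref{IV Thm}.

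One discrepancy should be flagged. Near the origin you obtain $|F(\xi)|\le K\xi^{\mu+\frac12}$, whereas the proposition as printed asserts $K\xi^{\mu+1}$, which is \emph{strictly stronger} on $0<\xi<1$; your estimate therefore does not literally prove the displayed inequality. However, the stated exponent cannot be achieved in general: the kernel $\sqrt{x\xi c_2}\,J_\mu(x\xi c_2)$ vanishes to order exactly $\mu+\frac12$ in $\xi$, so already for a regular $f\ge0$ supported in $[a,b]$ one has $F(\xi)\asymp\xi^{\mu+\frac12}$. The exponent $\mu+1$ appears to be a misprint for $\mu+\frac12$; this is confirmed by the paper's own use of the proposition in the proof of Theorem \ref{FV Thm}, where the bound invoked is $|\xi^{3/2-\eta}H(\xi)|\le K\xi^{2+\mu-\eta}$, i.e.\ $|H(\xi)|\le K\xi^{\mu+\frac12}$ for $0<\xi<1$. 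With that correction your proof is complete and sufficient for all the applications made of the proposition.
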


Before formulating the next theorem, we remind a reader that compactly supported distributions $\mathcal{E}'(\RR_+)$ are elements of $\mathcal{K}_{\mu}'(\RR_+)$.

\begin{theorem}\label{IV Thm}
Assume that $f\in \mathcal K'_{\mu}(\RR_+)$ can be decomposed into $f=g+h$, where $g$ is an ordinary function satisfying the hypothesis of Lemma \ref{IVT lema}, and $h\in \mathcal E'(\RR_+)$ is of order $r$. Let  $\mu\geq 1/2$ and $\eta\in\R$ be such that $3/2+r<\eta<2+\mu$. Then
\begin{equation*}
\lim_{\xi\to\infty}e^{i\xi^2 c_1/2}\xi^{3/2-\eta}H_{\mu}^{\alpha}f(\xi)=\rho\; H(\mu,\eta),
\end{equation*}
where $H(\mu,\eta)$ is defined by \eqref{funkcija H}.
\end{theorem}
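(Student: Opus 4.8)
The plan is to exploit the decomposition $f=g+h$ together with the linearity of the FrHT, treating the two pieces separately and then recombining. For the ordinary part $g$, I would apply Lemma~\ref{IVT lema} directly, since $g$ satisfies its hypotheses; this immediately yields
\[
\lim_{\xi\to\infty}e^{i\xi^2 c_1/2}\xi^{3/2-\eta}H_{\mu}^{\alpha}g(\xi)=\rho\,H(\mu,\eta).
\]
Thus the entire burden of the proof is to show that the compactly supported part $h$ contributes nothing in the limit, i.e.
\[
\lim_{\xi\to\infty}e^{i\xi^2 c_1/2}\xi^{3/2-\eta}H_{\mu}^{\alpha}h(\xi)=0.
\]

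First I would identify $H_{\mu}^{\alpha}h(\xi)$ with the smooth function $F(\xi)$ from Proposition~\ref{ocena pomocne}, since for $h\in\mathcal E'(\RR_+)$ the distributional FrHT acts exactly as pairing against the kernel $C_{\alpha,\mu}e^{-i((x^2+\xi^2)/2)c_1}\sqrt{x\xi c_2}\,J_\mu(x\xi c_2)$. Proposition~\ref{ocena pomocne} then gives the growth bound $|H_{\mu}^{\alpha}h(\xi)|=|F(\xi)|\le K\xi^{s}$ for $\xi>1$, with $s$ depending on the order $r$ of $h$. However, the crude constant $s$ is too weak; the decay factor $\xi^{3/2-\eta}$ only gives $\xi^{3/2-\eta+s}$, which need not tend to $0$. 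The key is therefore to sharpen the exponent by using the precise order $r$: I would re-examine the estimate underlying Proposition~\ref{ocena pomocne} (following \cite[Theorem 2]{Zem1} and the representation \eqref{difer}), differentiating the kernel at most $r$ times in $x$ and using the asymptotics $\sqrt{z}\,J_\mu(z)=O(1)$ as $z\to\infty$ together with $\supp h\subset[a,b]\subset\RR_+$. Each $x$-derivative of $e^{-ix^2c_1/2}\sqrt{x\xi c_2}\,J_\mu(x\xi c_2)$ produces at most one extra power of $\xi$ (via the chain rule on $J_\mu(x\xi c_2)$ and the phase), so on the fixed compact support one obtains $|F(\xi)|\le K'\xi^{r+1/2}$ for large $\xi$.

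Combining this refined bound with the prefactor yields $|e^{i\xi^2c_1/2}\xi^{3/2-\eta}H_{\mu}^{\alpha}h(\xi)|\le K'\xi^{3/2-\eta+r+1/2}=K'\xi^{2+r-\eta}$. The hypothesis $\eta>3/2+r$ — in fact any $\eta>2+r$ would suffice, and the stated range $3/2+r<\eta<2+\mu$ must be matched against the exponent actually produced — forces the exponent $2+r-\eta<0$, so this term vanishes as $\xi\to\infty$. Adding the two limits gives the claimed result. I expect the main obstacle to be the bookkeeping in the second step: obtaining the \emph{sharp} power of $\xi$ in terms of the order $r$ (rather than the unspecified $s$ of Proposition~\ref{ocena pomocne}), since one must carefully track how the $r$-fold differentiation in \eqref{difer} interacts with the Bessel kernel's large-argument behavior and confirm that the resulting exponent is genuinely dominated by the hypothesis $\eta>3/2+r$. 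Verifying that the constant $C_{\alpha,\mu}$ and the phase factors $e^{\pm i\xi^2 c_1/2}$ cancel appropriately (they have modulus one, so they do not affect the estimates) is routine but should be stated explicitly.
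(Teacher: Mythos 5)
Your overall strategy coincides with the paper's: split $f=g+h$, apply Lemma \ref{IVT lema} to $g$, and kill the contribution of $h$ by combining the order-$r$ estimate \eqref{difer} with derivative bounds on the kernel. However, the one step you yourself flag as uncertain is exactly where your argument fails to close, and the exponent you produce is not the right one. With the bound $|H^\alpha_\mu h(\xi)|\le K'\xi^{r+1/2}$ you get $\xi^{3/2-\eta+r+1/2}=\xi^{2+r-\eta}$, and the hypothesis $\eta>3/2+r$ only gives $2+r-\eta<1/2$, which does not force the limit to be zero (take $r=0$, $\eta=1.6$: the exponent is $0.4$). You would need $\eta>2+r$, which is not assumed. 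Leaving the statement that the range ``must be matched against the exponent actually produced'' is an acknowledgement of the gap, not a proof.

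The missing idea is that the extra half-power of $\xi$ is spurious. Writing $z=x\xi$, the Bessel part of the kernel is a function of $z$ alone, $\sqrt{zc_2}\,J_\mu(zc_2)$, so $D_x^k$ applied to it gives $\xi^k D_z^k\bigl(\sqrt{zc_2}\,J_\mu(zc_2)\bigr)$, and the functions $D_z^k\bigl(\sqrt{zc_2}\,J_\mu(zc_2)\bigr)$ for $k=0,\dots,r$ are $O(1)$ as $z\to\infty$ (from the large-argument asymptotics of $J_\mu$) and as $z\to 0^+$ (here one needs $r<\mu+1/2$, which follows from $3/2+r<\eta<2+\mu$), hence bounded on all of $\RR_+$. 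The factor $\sqrt{\xi}$ is thus absorbed into the bounded function of $z$ rather than surviving as a separate power; derivatives falling on the phase $e^{-ix^2c_1/2}$ or on the cutoff $\lambda$ produce only quantities bounded on the compact support of $h$. This yields $|H^\alpha_\mu h(\xi)|\le K\xi^{r}$ for $\xi>1$, and then $\xi^{3/2-\eta}\cdot\xi^{r}\to 0$ precisely under the stated hypothesis $\eta>3/2+r$. With that correction your argument is the paper's proof.
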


\begin{proof}
In view of Proposition \ref{ocena pomocne}, the function
\begin{equation}\label{pomocna1}
   H(\xi)=\langle h(x),C_{\alpha,\mu}e^{-i(\frac{x^2+\xi^2}{2}) c_1}\sqrt{x\xi c_2}J_{\mu}(x\xi c_2)\rangle,\quad \xi\in\RR_+
\end{equation}
is smooth and exhibits slow growth as $\xi\rightarrow\infty$.
We can now relate the growth of $H(\xi)$ to the order of $h$.

\noindent Let $\lambda(x)\in \mathcal D(\RR_+)$ such that $\lambda(x)\equiv 1$ on a neighborhood of the support of $h$. Then, by (\ref{difer}), we obtain

\begin{eqnarray*}
& & |H(\xi)| \leq C(\alpha) \sup_{x\in\R_+}|D_{x}^{r}(\lambda(x) e^{-i(\frac{x^2+\xi^2}{2}) c_1}\sqrt{x\xi c_2}J_{\mu}(x\xi c_2))|\\
& \leq & C_1(\alpha) \sup_{x\in\R_+}
\sum_{n=0}^{r}\sum_{k=0}^{n}\binom{r}{n}\binom{n}{k} \Big |x^{n-k} D_{x}^{r-n}\lambda(x)\Big| \Big| \xi^k D_z^k (\sqrt{z c_2}J_{\mu}(z c_2) ) \Big |.
\end{eqnarray*}

\noindent where $z=x\xi$. There is the constant $C_{r-n}$ such that $|x^{n-k}D_{x}^{r-n}\lambda(x)|\leq C_{r-n}$.
From the series expansion for $J_{\mu}$ \cite[p.134]{Jahnke},  we see that $D_{z}^{n}(\sqrt{zc_2}J_{\mu}({z c_2}))=O(1)$ as $z\to 0^{+}$ for $n=0,1,\dots, r$ since $r<\mu+1/2$. From the differential formula \cite[p.154]{Jahnke} and the asymptotic behavior \cite[ p.147]{Jahnke} of $J_{\mu}(z)$ it follows that $D_{z}^{n}(\sqrt{zc_2}J_{\mu}({z c_2}))=O(1)$ as $z\to \infty$. Since $D_{z}^{n}(\sqrt{zc_2}J_{\mu}({z c_2}))$ is continuous on $\RR_+$, it follows that it is also bounded for $n=0,1,...,r$. Hence, for $\xi>1$, $|H(\xi)|<K\xi^r$ for some sufficiently large constant $K=K(\alpha)$, which depends of $\alpha$.

\noindent As $3/2+r-\eta<0$, we have that $e^{i\xi^2 c_1/2}\xi^{3/2-\eta}H(\xi)\to 0$, as $\xi\to \infty$. On the other hand, since the support of $h\in\mathcal E'(\RR_+)$ is a compact subset of $\RR_+$, it follows that
$$\lim_{x\to 0^{+}}e^{-i\frac{x^2}{2}c_1}x^{\eta-1/2}h(x)=0.$$
Using Lemma \ref{IVT lema}, we complete the proof.

\end{proof}

\subsection{A final value theorem for the FrHT} After making some changes and using the proof of \cite[Lemma 2]{Zem3}, the FVT for the FrHT \cite[Theorem 3.1]{Gudadhe} takes on the following form:

\begin{lemma}\label{FVT lema}
Let $3/2 < \eta < 2+\mu$. Suppose $f$ satisfies the following conditions:
\begin{itemize}
\item[(i)] On every interval of the form $0<x<X$ ($X<\infty$), $x^{\mu+1/2}f(x)$ is Lebesgue integrable.
\item[(ii)] There exists a complex number $\delta$ such that
$$ \lim_{x\rightarrow \infty}e^{-\frac{i}{2}x^2 c_1}x^{\eta-\frac{1}{2}}f(x)=\delta\frac{ c_2^{\frac{3}{2}-\eta}}{C_{\alpha,\mu}}$$
%$$\lim_{x\rightarrow \infty}x^{\eta-1/2}f(x)=\delta.$$
\end{itemize}

\noindent Then
$$\lim_{\xi\rightarrow 0^+}e^{i\xi^2c_1/2}\xi^{3/2-\eta}H^\alpha_{\mu} f(\xi)=\delta\; H(\mu,\eta),$$
where $H(\mu,\eta)$ is defined by (\ref{funkcija H}).
\end{lemma}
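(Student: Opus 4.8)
The plan is to mirror the proof of Lemma \ref{IVT lema}, interchanging the roles of the origin and infinity (and, accordingly, replacing the limit $\xi\to\infty$ by $\xi\to 0^+$). First I would record that the hypotheses $3/2<\eta<2+\mu$ guarantee $\int_0^\infty|z^{1-\eta}J_\mu(z)|\,dz<\infty$: near $z=0$ one has $z^{1-\eta}J_\mu(z)=O(z^{1-\eta+\mu})$ with $1-\eta+\mu>-1$, while near $z=\infty$ the boundedness of $\sqrt{z}J_\mu(z)$ gives $z^{1-\eta}J_\mu(z)=O(z^{1/2-\eta})$ with $1/2-\eta<-1$. These same two estimates, combined with condition (i), also show that $H_\mu^\alpha f(\xi)$ is well defined for every $\xi>0$. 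Using \eqref{pomocna} to write $\delta\, H(\mu,\eta)=\delta\,\xi c_2\int_0^\infty(x\xi c_2)^{1-\eta}J_\mu(x\xi c_2)\,dx$ and cancelling powers exactly as in Lemma \ref{IVT lema}, I obtain the single estimate
\begin{equation*}
\big|e^{i\xi^2c_1/2}\xi^{3/2-\eta}H_\mu^\alpha f(\xi)-\delta\, H(\mu,\eta)\big|\leq \xi\int_0^\infty\big|(x\xi c_2)^{1-\eta}J_\mu(x\xi c_2)\big|\,\Big|C_{\alpha,\mu}e^{-\frac{i}{2}x^2c_1}(xc_2)^{\eta-\frac{1}{2}}f(x)-\delta c_2\Big|\,dx .
\end{equation*}
Condition (ii) is precisely the statement that the bracketed factor tends to $0$ as $x\to\infty$.

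Next I would split the integral at a cutoff $X$. For the tail $\int_X^\infty$, given $\epsilon>0$ I choose $X$ so large that $\big|C_{\alpha,\mu}e^{-\frac{i}{2}x^2c_1}(xc_2)^{\eta-\frac{1}{2}}f(x)-\delta c_2\big|<\epsilon$ for all $x>X$; after the substitution $z=x\xi c_2$ the remaining weight integrates to the $\xi$-independent constant $c_2^{-1}\int_0^\infty|z^{1-\eta}J_\mu(z)|\,dz$, so the whole tail is bounded by $\epsilon\,c_2^{-1}\int_0^\infty|z^{1-\eta}J_\mu(z)|\,dz$ uniformly in $\xi$. For the remaining piece $\int_0^X$, with $X$ now fixed, I would let $\xi\to0^+$ and treat the two summands separately. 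The $\delta c_2$ summand reduces, via $z=x\xi c_2$, to $\delta\,c_2^{-1}\int_0^{X\xi c_2}z^{1-\eta}J_\mu(z)\,dz$, which tends to $0$ because $z^{1-\eta}J_\mu(z)$ is integrable near the origin and $X\xi c_2\to0$. For the $f$ summand, after cancelling powers the integral equals $\xi^{2-\eta}c_2^{1/2}C_{\alpha,\mu}\int_0^X x^{1/2}J_\mu(x\xi c_2)e^{-\frac{i}{2}x^2c_1}f(x)\,dx$; using $|J_\mu(x\xi c_2)|\leq C(x\xi c_2)^\mu$ (valid since $x\xi c_2$ is small on $(0,X)$ for small $\xi$) together with the hypothesis that $x^{\mu+1/2}f(x)\in L^1(0,X)$, this is $O(\xi^{2-\eta+\mu})$, hence tends to $0$ since $2-\eta+\mu>0$. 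Combining the two pieces, the left-hand side is at most $\epsilon\big(1+c_2^{-1}\int_0^\infty|z^{1-\eta}J_\mu(z)|\,dz\big)$ for all sufficiently small $\xi$, and letting $\epsilon\to0$ completes the argument.

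The main obstacle, and the genuine departure from the proof of Lemma \ref{IVT lema}, is the piece over $(0,X)$. In the IVT lemma the analogous near-origin piece was controlled by pulling out a supremum of the bracketed factor, which was legitimate because the relevant limit sat at the lower endpoint $x=0^+$. Here the hypothesis instead places the limit at $x=\infty$, so on $(0,X)$ I have no uniform control of $(xc_2)^{\eta-\frac{1}{2}}f(x)$; the only information available near the origin is the weaker integrability of $x^{\mu+1/2}f(x)$ from condition (i). The resolution is to exploit the vanishing of the kernel: for fixed $X$ the argument $x\xi c_2\to0$ as $\xi\to0^+$, so $J_\mu(x\xi c_2)=O((x\xi c_2)^\mu)$ furnishes the decaying factor $\xi^\mu$, and the surviving power $\xi^{2-\eta+\mu}$ is positive precisely under the hypothesis $\eta<2+\mu$. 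This is why I would estimate the $(0,X)$ integral directly rather than through a supremum bound.
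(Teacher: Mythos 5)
Your proof is correct and follows essentially the same route as the paper's: the same reduction via \eqref{pomocna} to a single integral against $\bigl|C_{\alpha,\mu}e^{-\frac{i}{2}x^2c_1}(xc_2)^{\eta-\frac12}f(x)-\delta c_2\bigr|$, the same split at a cutoff $X$ with the tail controlled by condition (ii) and the absolute convergence of $\int_0^\infty|z^{1-\eta}J_\mu(z)|\,dz$, and the same extraction of the factor $\xi^{2+\mu-\eta}\to 0$ on $(0,X)$ from $J_\mu(z)=O(z^\mu)$ together with condition (i). The only cosmetic difference is that you estimate the two summands on $(0,X)$ separately while the paper keeps them in one integral; incidentally, your placement of the supremum over the tail $x>X$ is the correct reading of the paper's display, where the supremum is misprinted as being over $0<x<X$.
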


\begin{remark}
It should be noted that since $J_{\mu}(x)=O(x^{\mu})$ as $x\rightarrow 0^+$ and $\sqrt{x}J_\mu(x)=O(1)$ as $x\rightarrow \infty$, the conditions on $f(x)$ ensure that $H^\alpha_{\mu} f(\xi)$ exists for every positive value of $\xi$.
\end{remark}

\begin{proof}
Utilizing (\ref{pomocna}) and the fact that
$$|\sqrt{z}J_{\mu}(z)|<C_{\mu}z^{\mu+1/2},\quad 0<z<\infty,$$
where $C_{\mu}$ is sufficiently large constant, we may write

\begin{eqnarray}
& & |e^{i\xi^2c_1/2}\xi^{\frac{3}{2}-\eta}H^\alpha_{\mu} f(\xi)-\delta H(\mu,\eta)|\nonumber  \\
& \leq & \xi^{\frac{3}{2}-\eta}\int_{0}^X \Big| \sqrt{x\xi c_2}J_{\mu}(x\xi c_2)\Big|\;\Big|C_{\alpha,\mu}e^{-\frac{i}{2}x^2c_1}f(x)-\delta c_2(xc_2)^{\frac{1}{2}-\eta}\Big|dx \nonumber \\
& + & \int_{X\xi}^\infty \sup_{0<x<X}\Big| C_{\alpha,\mu}e^{-\frac{i}{2}x^2 c_1}(xc_2)^{\eta-\frac{1}{2}}f(x)-\delta c_2\Big| \Big| (x\xi c_2)^{1-\eta}J_{\mu}(x\xi c_2)\Big| d(x\xi)  \nonumber \\
& \leq & C_{\mu} \xi^{2+\mu-\eta}\int_{0}^X (xc_2)^{\mu+\frac{1}{2}}\;\Big|C_{\alpha,\mu}e^{-\frac{i}{2}x^2c_1}f(x)-\delta c_2(xc_2)^{\frac{1}{2}-\eta}\Big|dx \label{konvergencija 3} \\
& + & \frac{1}{c_2}\int_0^\infty \Big| z^{1-\eta}J_{\mu}(z)\Big| dz\; \sup_{0<x<X}\Big| C_{\alpha,\mu}e^{-\frac{i}{2}x^2 c_1}(xc_2)^{\eta-\frac{1}{2}}f(x)-\delta c_2\Big| .  \label{konvergencija 4}
\end{eqnarray}

\noindent The term in integral (\ref{konvergencija 3}) is independent of $\xi$ and $x^{\mu+1/2}f(x)$ is integrable according to assumption $(i)$. Hence, the integral in (\ref{konvergencija 3}) converges if $x^{\mu-\eta+2}$ converges. Since $\mu-\eta+2>0$, we conclude that the integral in (\ref{konvergencija 3}) converges. It is easy to see that the integral in (\ref{konvergencija 4}) converges. Let $\epsilon >0$. By choosing $X$ sufficiently large, (\ref{konvergencija 4}) can be made less than $\epsilon/2$. Next, there exists $\Xi>0$ such that (\ref{konvergencija 3}) is less than $\epsilon/2$ for $0<\xi<\Xi$. This completes the proof of the lemma.
\end{proof}

The next step is to extend Lemma \ref{FVT lema} to distributions. We utilize the proof of \cite[Theorem 2]{Zem3}.

\begin{theorem}\label{FV Thm}
Let $3/2<\eta<\mu+2$. Assume that $f\in\mathcal{K}'_{\mu}(\RR_+)$ can be decomposed into $f=g+h$, where $g$ is an ordinary function satisfying the hypothesis of Lemma \ref{FVT lema} and $h\in\mathcal E'(\RR_+)$. Then

$$\lim_{\xi\to 0^+}e^{i\xi^2 c_1/2}\xi^{3/2-\nu}H_{\mu}^{\alpha}f(\xi)=\delta H(\mu,\eta),$$
where $H(\mu,\nu)$ is defined by \eqref{funkcija H}.
\end{theorem}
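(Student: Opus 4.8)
The plan is to exploit the linearity of the FrHT together with the two ingredients already in place: Lemma \ref{FVT lema} for the regular part $g$ and Proposition \ref{ocena pomocne} for the compactly supported part $h$. Since $H_\mu^\alpha$ acts linearly on $\mathcal{K}'_{\mu}(\RR_+)$, we may write $H_\mu^\alpha f = H_\mu^\alpha g + H_\mu^\alpha h$, so it suffices to multiply each summand by the weight $e^{i\xi^2 c_1/2}\xi^{3/2-\eta}$ and take the limit $\xi\to 0^+$ separately. (Throughout I read the $\nu$ in the statement as $\eta$.)

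For the regular part, by hypothesis $g$ satisfies conditions (i) and (ii) of Lemma \ref{FVT lema}; in particular condition (ii) supplies the complex number $\delta$. Applying the lemma verbatim gives
$$\lim_{\xi\to 0^+}e^{i\xi^2c_1/2}\xi^{3/2-\eta}H^\alpha_{\mu} g(\xi)=\delta\, H(\mu,\eta).$$
This is the source of the main term, and it is precisely here that the admissible range $3/2<\eta<\mu+2$ is consumed.

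For the singular part $h\in\mathcal E'(\RR_+)$, I would identify $H_\mu^\alpha h(\xi)$ with the smooth function
$$\langle h(x),C_{\alpha,\mu}e^{-i(\frac{x^2+\xi^2}{2}) c_1}\sqrt{x\xi c_2}J_{\mu}(x\xi c_2)\rangle$$
exactly as in \eqref{pomocna1}, and invoke the near-origin estimate of Proposition \ref{ocena pomocne} (with $h$ in place of $f$), namely $|H_\mu^\alpha h(\xi)|\leq K\xi^{\mu+1}$ for $0<\xi<1$. Multiplying by the weight and using $|e^{i\xi^2c_1/2}|=1$ yields
$$\big|e^{i\xi^2c_1/2}\xi^{3/2-\eta}H^\alpha_{\mu} h(\xi)\big|\leq K\,\xi^{\mu+\frac{5}{2}-\eta},\qquad 0<\xi<1.$$
The hypothesis $\eta<\mu+2$ forces $\mu+\tfrac{5}{2}-\eta>\tfrac12>0$, so the right-hand side tends to $0$ as $\xi\to 0^+$ and the $h$-contribution disappears in the limit.

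Combining the two limits gives
$$\lim_{\xi\to 0^+}e^{i\xi^2c_1/2}\xi^{3/2-\eta}H^\alpha_{\mu} f(\xi)=\delta\, H(\mu,\eta)+0=\delta\, H(\mu,\eta),$$
which is the assertion. I do not expect a genuine obstacle: the argument is a clean assembly of the prior lemma and proposition. The only point requiring attention is that the exponent $\mu+\tfrac52-\eta$ stay strictly positive, and since the $g$-part already restricts $\eta$ to $\eta<\mu+2$, this is comfortably satisfied (the estimate for $h$ alone would even tolerate the weaker bound $\eta<\mu+\tfrac52$), so no tightening of the hypotheses is needed.
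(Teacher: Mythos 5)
Your proof is correct and follows essentially the same route as the paper: split $f=g+h$, apply Lemma \ref{FVT lema} to $g$, and kill the $h$-contribution via the near-origin bound $|H_\mu^\alpha h(\xi)|\leq K\xi^{\mu+1}$ from Proposition \ref{ocena pomocne}, which makes the weighted term $O(\xi^{\mu+5/2-\eta})\to 0$ under $\eta<\mu+2$. (Your exponent $\mu+\tfrac52-\eta$ is in fact the sharper, correctly computed one; the paper states the slightly weaker $2+\mu-\eta$, and you also rightly read the stray $\nu$'s in the statement as $\eta$.)
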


\begin{proof}

Using Proposition \ref{ocena pomocne}, we infer that the function defined by (\ref{pomocna1}) is a smooth function for $0<\xi<\infty$, and satisfies the inequality $|\xi^{3/2-\eta}H(\xi)|\leq K\xi^{2+\mu-\eta}$ for $0<\xi<1$, where $K=K(\alpha)$ is a sufficiently large constant. Since $2+\mu-\nu>0$, we have
$\lim_{\xi\rightarrow 0^+}K\xi^{2+\mu-\eta}= 0.$
By the assumption, $\lim_{x\to\infty}e^{-i\frac{x^2}{2}c_1}x^{\eta-1/2}h(x)=0$. Finally, utilizing Lemma \ref{FVT lema}, we conclude the proof.

\end{proof}

\section{Declaration}
All authors contributed equally to this work, and have given a consent to participate and consent for publication.
No funding was received.

\subsection{Conflict of interest}
All authors declare that they have no conflicts of interest.

\subsection{Data availability}
The authors confirm that the data supporting the findings of this study are available within the article.

\subsection{Acknowledgment} 
%This work is supported by the Project  ,,Modern methods of time-frequency analysis and fixed point
%theory in spaces of generalized functions with applications''  19.032/431-1-22/23. of the Republic of Srpska Ministry for Scientific and Technological Development and Higher Education.
%
%The first and the fourth authors are supported by the Serbiana Academy of Sciences and Arts, project F-10; The second author is supported by the Science Fund of the Republic of
%Serbia, $\#$GRANT No. 2727, {\it Global and local analysis of operators and
%distributions} - GOALS and by the Ministry of Science, Technological Development and Innovation of the Republic of Serbia (Grants No. 451-03-137/2025-03/ 200169).

The first and the fourth authors are supported by the Serbian Academy of Sciences and Arts, project F-10; The second author is supported by the Science Fund of the Republic of
Serbia, $\#$GRANT No. 2727, {\it Global and local analysis of operators and
distributions} - GOALS and by the Ministry of Science, Technological Development and Innovation of the Republic of Serbia (Grants No. 451-03-137/2025-03/ 200169); The first and the third authors are supported by the Republic of Srpska Ministry for Scientific and Technological Development and Higher Education, project \textit{Modern methods of time-frequency analysis and fixed point
theory in spaces of generalized functions with applications}, No. 19.032/431-1-22/23.
%%%%%%%%%%%%%%%%%%%%%%%%%%%%%%%%%%%%%%%%%%%%%%%%%%%%%%%%%%%%%%%%%%%%%%%%%%%%%%%%%%%%%%%%%%%%%%%%55

%%%%%%%%%%%%%%%%%%%%%%%%%%%%%%%%%%%%%%%%%%%%%%%%%%%%%%%%%%%%%%%%%%%%%%%%%%%%%%%%%%%%%%%%%%

%%%%%%%%%%%%%%%%%%
%%%%%%%%%%%%%%%


\begin{thebibliography}{99}
\bibitem{AMP}   Atanasova, S., Maksimovi\'{c}, S., Pilipovi\'{c}, S., {Abelian and Tauberian results for the fractional  Fourier and  short time Fourier transforms}. Integral transform and Special functions, 35 (1), 1-16, 2024.

\bibitem{APS}  Atanasova, S., Pilipovi\'{c}, S. and Saneva, K., {Directional Time-Frequency Analysis and Directional Regularity}, Bull. Malays. Math. Sci. Soc. 42,(2019), 2075--2090.

\bibitem{Betancor} Betancor J.J., On Hankel transformable distribution spaces, Publications de L'institut mathematique, 65(79), 123--141 (1999).

\bibitem{Betancor1} J.J. Betancor and L. Rodriquez-Mesa, Hankel convolution on distributions spaces with exponential growth, Studia Mathematica 121(1) (1996), 35--52

\bibitem{Gudadhe} Gudadhe A.S., Taywade R.D., Mahalle V.N., Initial and Final Value Theorem on Fractional Hankel Transform, IOSR Journal of Mathematics, 5(1) (2013), 36--39


\bibitem{Hankel1} Bingham N. H., A Tauberian theorem for integral transforms of Hankel type, J. London Math. Soc. (2) 5 (1972).

  \bibitem{Hankel2} Bingham N. H., Inoue A., Abel-Tauber theorems for Hankel transforms, Trends in Probability and Related Analysis, Proceedings, Symposium on Analysis and Probability, Nat. Acad. Taiwan, 1996, World Scientific, Singapore (1997), 83--90.

\bibitem{Erdelyi} A. Erdelyi, W. Magnus, F. Oberhettinger, F.G. Tricomi, Tables of Integral Transformations, vol II, McGraw-Hill, New York, 1954.

  \bibitem{Jahnke} Jahnke E., Emde F., Losch F., Tables of Higher Functions, 6th ed., McGraw- Hill, New York, 1960.

  \bibitem{5} Kerr, F.H.: Fractional powers of Hankel transforms in the Zemanian spaces. J. Math. Anal. Appl. 166,
65--83 (1992).

   \bibitem{55}  Kerr, F. H., A fractional power theory for Hankel transforms in $L^2(R+)$, J. Math. Anal. Appl. 158 (1991), 114-123.

\bibitem{6-2}
 Kostadinova S., Pilipovi\'{c} S,  Saneva K., Vindas J.,
 \newblock The ridglet transform of distribution and   quasiasymptotic behavior of distributions,
 \newblock {\em  Oper. Theory Adv. Appl.}
  \textbf{245} (2015), 183--195.

\bibitem{KPSV}
 Kostadinova S., Pilipovi\'{c} S,  Saneva K., Vindas J.,
 \newblock  The short-time Fourier transform of distributions of    exponential type    and Tauberian theorems for S-asymptotics,
  \newblock {\em Filomat}, \textbf{30} (2016), no. 11, 3047--3061.properties. Opt. Commun.
176, 71--75 (2000).
% \bibitem{Hankel3}  Luther W.,
%Abelian and Tauberian theorems for a class of integral transforms,
%Journal of Mathematical Analysis and Applications,
%96 (2)
%(1983), 365-387.
\bibitem{MAM} Maksimovi\'{c} S., Atanasova S., Mitrovi\'{c} Z., Salma H., Mlaiki N.,  Abelian and Tauberian results for the fractional Fourier cosine (sine) transform, AIMS Mathematics, 12225-1223, (2024).

\bibitem{NamiasH}  Namias, V., Fractionalization of Hankel transform. J. Inst. Math. Appl. 26, 187--197 (1980).
  \bibitem{PST}%14
  Pilipovi\'c, S., Stankovi\'c,  B., Taka\v ci, A., Asymptotic
Behavior and Stieltjes Transformation of Distribution,
Taubner-Texte zur Mathematik, band 116, 1990.

\bibitem{7}
S. Pilipovi\'{c}, B. Stankovic and J. Vindas,
 \newblock {\em	 Asymptotic behavior of generalized functions,}
 \newblock World Scientific Publishing  Co.  Pte. Ltd., Hackensack, NJ, 2012.
\bibitem{Pras} Prasad, A., Singh, V.K. The fractional Hankel transform of certain tempered distributions and pseudo-differential operators. Ann Univ Ferrara 59, 141--158 (2013).

\bibitem{Ridenhour} Ridenhour, J. R; Soni, R. P.,  Parseval Relation and Tauberian Theorems for the Hankel Transform, SIAM Journal on Mathematical Analysis; Philadelphia  5(5), (1974), 612--628.

\bibitem{She} Sheppard, C.J.R., Larkin, K.G., Similarity theorems for fractional Fourier transforms and fractional
Hankel transforms. Opt. Commun. 154, 173--178 (1998)

\bibitem{Vindas1}%22
Vindas, J., Structural Theorems for Quasiasymptotics of
Distributions at Infinity, Pub. Inst. Math. Beograd, N.S.,
84(98)(2008), 159--174.

\bibitem{Vindas5} Vindas, J., The structure of quasiasymptotics of Schwartz distributions, {\it Banach Center Publ.}, {88}(2010), 297-314.

\bibitem{Vindas2}%21
Vindas, J., Pilipovi\'{c}, S., Structural theorems for
quasiasymptotics of distributions at the origin, Math. Nachr.
282(2.11)(2009), 1584--1599.

\bibitem{Robertson}  Robertson, A.P.,  Robertson, W.S., Topological vector spaces , Cambridge Univ. Press (1964)

 \bibitem{VDZ} Vladimirov, V.S., Drozinov, Ju., Zavialov, B. I., \textit{Tauberian Theorems for Generalized Functions} Dordrecht: Kluwer Academic Publishers Group, 1988.
\bibitem{Zem}  Zemanian, A.H., Generalized Integral Transformations. Interscience Publishers, New York (1968)

\bibitem{Zem1} Zemanian, A.H., A distributional Hankel transformation, SIAM J. Appl. Math. 14 (1966), 561--576.

\bibitem{Zem2} Zemanian, A.H., The Hankel transformation of certain distributions of rapid growth, SIAM J. Appl. Math. 14 (1966), 678--690.

\bibitem{Zem3}  Zemanian, A.H., Some Abelian Theorems for the Distributional Hankel and K Transformations, SIAM Journal on Applied Mathematics,14(6), (1966), 1255--1265.

\bibitem{Zem4} Zemanian, A.H., Distribution theory and Transform Analysis, McGraw-Hill, New York, 1965.

\bibitem{13} Zhang, Y., Funaba, T., Tanno, N.: Self-fractional Hankel functions and their properties, Optics Communications, Volume 176, Issues 3, (2000), 71--75.


\end{thebibliography}
\end{document}